\numberwithin{equation}{section}
\numberwithin{figure}{section}
\begin{document}
\sloppy % to avoid text going into the margin
\newtheorem{theorem}{Theorem}[section]
\newtheorem{lemma}[theorem]{Lemma}
\newtheorem{corollary}[theorem]{Corollary}

\theoremstyle{definition} % uses roman letters instead of italics (italics in theorem)
\newtheorem{definition}[theorem]{Definition}
\newtheorem{remark}[theorem]{Remark}
\newtheorem{example}[theorem]{Example}
\newtheorem{assumption}[theorem]{Assumption}

% The following commands have been added by Sander Hille in order to avoid problems with the package bbm:
\newcommand{\mathbbm}[1]{{#1\!\!#1}}
\newcommand{\ind}{{\mathbbm{1}}}

% Further new commands:
\newcommand{\CM}{\mathcal{M}}
\newcommand{\CMc}{\overline{\CM}}
\newcommand{\smfrac}[2]{\mbox{$\frac{#1}{#2}$}}
\newcommand{\geqs}{\geqslant}
\newcommand{\leqs}{\leqslant}
\newcommand{\nn}{{(n)}}
\newcommand{\mm}{{(m)}}

\newcommand{\Lip}{{\mathrm{L}}}
\newcommand{\BL}{{\mathrm{BL}}}
\newcommand{\TV}{{\mathrm{TV}}}
\newcommand{\FM}{{\mathrm{FM}}}
\newcommand{\pair}[2]{\left\langle #1 , #2 \right\rangle}
\newcommand{\Int}[4]{\int_{#1}^{#2}\! #3 \, #4}
\newcommand{\map}[3]{#1 : #2 \rightarrow #3}
\def\R{\mathbb{R}}
\def\Rp{\mathbb{R}^+}
\def\N{\mathbb{N}}
\def\Np{\mathbb{N}^+}
\def\eps{\varepsilon}
\def\aeps{a}
\def\K{\mathcal{K}}

\title{Mild solutions are weak solutions\\in a class of (non)linear measure-valued\\evolution equations on a bounded domain}
\author{Joep H.M.~Evers\thanks{Department of Mathematics, Simon Fraser University, Burnaby, Canada, and Department of Mathematics and Statistics, Dalhousie University, Halifax, Canada. Email: \texttt{jevers@sfu.ca}.}}
\date{}
\maketitle

%%%%%%%%%%%%%%%%%%%%%%

%\pagestyle{myheadings}
%\thispagestyle{plain}
%\markboth{J.H.M.~EVERS, S.C.~HILLE AND A.~MUNTEAN}{MEASURE-VALUED EQUATIONS WITH SOLUTION-DEPENDENT VELOCITY}

\begin{abstract}
We study the connection between mild and weak solutions for a class of measure-valued evolution equations on the bounded domain $[0,1]$. Mass moves, driven by a velocity field that is either a function of the spatial variable only, $v=v(x)$, or depends on the solution $\mu$ itself: $v=v[\mu](x)$. The flow is stopped at the boundaries of $[0,1]$, while mass is gated away by a certain right-hand side. In previous works \cite{EvHiMu_JDE,EvHiMu_SIMA}, we showed the existence and uniqueness of appropriately defined mild solutions for $v=v(x)$ and $v=v[\mu](x)$, respectively. In the current paper we define weak solutions (by specifying the weak formulation and the space of test functions). The main result is that the aforementioned mild solutions are weak solutions, both when $v=v(x)$ and when $v=v[\mu](x)$.\\
\\
\textbf{Keywords:} Measure-valued equations, nonlinearities, time discretization, flux boundary condition, mild solutions, weak solutions, particle systems.\\
\textbf{Mathematics Subject Classification 2010:} 28A33, 34A12, 45D05, 35F16.

%\cite{EvHiMu_JDE}\\
%\\
%\textbf{Keywords:} Measure-valued equations, nonlinearities, time discretization, flux boundary condition, mild solutions, %boundary layer asymptotics, singular limit, 
%%nonsmooth analysis, %convergence rate, 
%particle systems.\\
%\\
%\textbf{Mathematics Subject Classification 2010:} 28A33, 34A12, 45D05, 35F16.
%
\end{abstract}
%46E27  	(Functional Analysis) Spaces of measures
%28A33  	Spaces of measures, convergence of measures
%35Q91 PDEs in connection with game theory, economics, social and behavioral sciences
% 34A12 Initial value problems, existence, uniqueness, continuous dependence and continuation of solutions
%35L65  Conservation laws

%\tableofcontents

%\newpage
\section{Introduction}
Measure-valued evolution equations have been used in a large number of recent mathematical publications to model for instance animal aggregations \cite{CanCarRos,CarrilloFornasierRosadoToscani}, structured populations \cite{Ackleh1,Gwiazda2,DG, Gwiazda1}, pedestrian dynamics \cite{CrisPiccTosinBook}, and defects in metallic crystals \cite{vMeurs14}. The majority of works that study well-posedness of measure-valued equations and properties of their solutions treat these equations in the full space --see for instance also \cite{BenzoniColomboGwiazda, CarrDiFranFigLauSlep11,Colombo_Crippa_Spirito,CrippaLecureux, TosinFrasca}-- although many relevant problems involve \textit{boundaries} and \textit{bounded domains}. Examples of such problems --apart from the ones mentioned above-- are intracellular transport processes, cf.~\cite[Section 1]{EversMBE}, and manufacturing chains \cite{Schleper}. Defining mathematically and physically `correct' boundary conditions is a challenge, however. The present paper is a continuation of the author's work (in collaboration with Hille and Muntean) that focuses explicitly on bounded domains and boundary conditions.\\
\\
%"Measures" are versatile mathematical objects from at least a twofold perspective: (1) a measure is an effective quantitiy that can naturally be cast in a balance principle  leading to suitable mass, momentum, and energy  equations, and (2) a measure can define mathematically correct couplings between discrete, continuum or stochastic objects.
Our first step is to consider a one-dimensional measure-valued transport equation restricted to the unit interval $[0,1]$ where mass moves according to a prescribed velocity field $v$ and stops when reaching the boundary. A short-hand notation for this equation is:
\begin{equation}\label{eq:main equation}
\frac{\partial}{\partial t}\mu_t + \frac{\partial}{\partial x} (v\,\mu_t) = F_f( \mu_t).
\end{equation}
Here, the perturbation map $\map{F_f}{\CM([0,1])}{\CM([0,1])}$ is given by $F_f(\mu):= f\cdot \mu$.\\
In \cite{EvHiMu_JDE}, we proved the well-posedness of this equation, in the sense of \emph{mild solutions}, and the convergence of solutions corresponding to a sequence $(f_n)_{n\in\N}$ in the right-hand side. Some specific choices for $(f_n)_{n\in\N}$ represent for instance effects in a boundary layer that approximate, as $n\to\infty$, sink or source effects localized on the boundary (flux boundary conditions). The boundary layer corresponds to exactly those regions in $[0,1]$ where the functions $f_n$ are nonzero.\\
\\
Next, we want to consider \eqref{eq:main equation} for velocity fields that are no longer \textit{fixed} elements of $\BL([0,1])$. Instead of $v$, we write $v[\mu]$ for the velocity field that depends functionally on the measure $\mu$. An example is 
\begin{equation}\label{eqn: example meas-dep v}
v[\mu](x) := \Int{[0,1]}{}{\K(x-y)}{d\mu(y)} = (\K*\mu)(x),
\end{equation}
where the convolution encodes nonlocal interactions due to a kernel $\K$ in a population with distribution $\mu$. This is a widely used choice of $v$, e.g.~in interacting particle systems or biological aggregation models. The  example \eqref{eqn: example meas-dep v} is a special case of the class of velocity fields (see Assumption \ref{ass: v properties}) that are admissible in the framework of the current paper.\\
\\
For such solution-dependent $v=v[\mu]$, the transport equation on $[0,1]$ becomes
\begin{equation}\label{eqn:main equation nonlin}
\frac{\partial}{\partial t}\mu_t + \frac{\partial}{\partial x} (v[\mu_t]\,\mu_t ) = F_f(\mu_t).
\end{equation}
The well-posedness of \eqref{eqn:main equation nonlin} for $f\in\BL([0,1])$ was proved in \cite{EvHiMu_SIMA} in the sense of mild solutions. The analysis turns out to build on the analysis for \eqref{eq:main equation}, hence it was useful to consider \eqref{eq:main equation} before the more general \eqref{eqn:main equation nonlin}. For $v=v[\mu]$, mild solutions are defined as the limit of a sequence $(\mu^k)_{k\in\N}$ of so-called \textit{Euler approximations}. Such $\mu^k$ is constructed on each subinterval $(t^k_j,t^k_{j+1}]$ as a mild solution to \eqref{eq:main equation} for velocity $v[\mu^k_{t^k_j}]$. Within a subinterval, $v[\mu^k_{t^k_j}]$ is a \textit{fixed} element of $\BL([0,1])$ that is the same for all time $t\in(t^k_j,t^k_{j+1}]$. For further details on how $\mu^k$ is defined, see Section \ref{sec: gen meas-dep v}.\\
\\
The works \cite{Canizo} and \cite{Gwiazda1} treat comparable models, however posed on \textit{infinite} domains. They consider weak solutions, and in fact, they construct those weak solutions that --roughly speaking-- correspond to `our' mild solutions.\\
\\
The aim of the current paper is:
\begin{center}
\textbf{to investigate how and in which sense the mild solutions from \cite{EvHiMu_JDE,EvHiMu_SIMA}\\correspond to weak solutions like the ones in e.g.~\cite{Canizo,Gwiazda1}.}
\end{center}
Note that the essence of weak solutions lies in the specific choice of the weak formulation and of the space of test functions that appear in the definition; cf.~e.g.~Definition~\ref{def:weak soln v prescr} with weak formulation \eqref{eqn:weak form v prescr} and space of test functions \eqref{eqn:weak v=v(x) test funct}.\\
\\
Compared to e.g.~\cite{Canizo,Gwiazda1}, our case is more complicated due to the bounded domain; the material flow is induced by the velocity $v$ in the interior, but it is stopped once characteristics reach any of the boundary points. The \textit{stopped flow} introduces subtleties when trying to find the appropriate definition of weak solutions. The domain in \cite{Gwiazda1} is in fact $[0,\infty)$. Their velocity is required to point inward at $x=0$, though, which is sufficient to make sure that no mass escapes the domain. For us, a demand on the sign of the velocity at $x=0$ or $x=1$ is too restrictive; cf.~the remark we make about this in \cite[Section 1]{EvHiMu_SIMA}.\\
\\
In the current work, we overcome these difficulties and give the appropriate definition of weak solutions. We believe this indeed is the appropriate definition because of the following main result of this paper, that consists of two parts. Formulated in plain words in a pseudo-theorem, the first part of this result reads:\\
\\
\textbf{Theorem.} \emph{Mild solutions to \eqref{eq:main equation} are weak solutions (in an appropriate sense).}\\
\\
A more precise formulation follows in Theorem~\ref{thm:mild is weak v=v(x)}. Next, we use this property on each of the subintervals in an Euler approximation and show that in the limit as the mesh size goes to zero, we obtain a weak solution to \eqref{eqn:main equation nonlin}. In other words:\\
\\
\textbf{Theorem.} \emph{Mild solutions to \eqref{eqn:main equation nonlin} are weak solutions.}\\
\\
This result is stated in full detail in Theorem \ref{thm:mild is weak v=v(mu)}.\\
\\
Our justification for speaking about \textit{appropriate} definition, is exactly the fact that we show the relation between these weak solutions and mild solutions in this paper (more about this in Section \ref{sec: gen meas-dep v}, directly after Definition \ref{def: mild soln nonlin}). Mild solutions have a considerable advantage over weak solutions in the sense that it is directly clear how they should be interpreted, whereas defining weak solutions involves some seemingly arbitrary choices. Which choices to make is not directly evident from modelling considerations.\\
On the other hand, as was argued in \cite[Section 1]{EvHiMu_SIMA}, the mild formulation in terms of the variation of constants formula \eqref{eq:VOC} follows directly from a probabilistic interpretation. For more details, see \cite[Section 6]{EvHiMu_JDE}. Moreover, the exact form of the variation of contsants formula is unambiguous, provided the system that is to be modelled.\\
Subsequently, mild solutions for $v=v[\mu]$ follow in a straight-forward manner, using the variation of constants formula as a building block; cf.~\eqref{eqn: Euler scheme} and Definition \ref{eqn: metric sup BL norm}.\\
\\
The structure of this paper is as follows. Section \ref{sec: summ tech} provides preliminaries on the stopped flow on the interval $[0,1]$ induced by the velocity field $v:[0,1]\to\R$. In Section \ref{sec:mild weak v prescr}, we recall the results from \cite{EvHiMu_JDE} regarding the existence and uniqueness of mild solutions to \eqref{eq:main equation}. We introduce the concept of weak solutions, and show in Theorem \ref{thm:mild is weak v=v(x)} that the mild solutions from \cite{EvHiMu_JDE} are weak solutions. Section \ref{sec: gen meas-dep v} briefly recalls the main ideas from \cite{EvHiMu_SIMA}: the construction of Euler approximations using solutions to the variation of constants formula as building blocks. Theorem \ref{thm: exist uniq nonlin} repeats the result that Euler approximations converge as the mesh size goes to zero; this result is an alternative way of saying that mild solutions to \eqref{eqn:main equation nonlin} exist and are unique. We show in Section \ref{sec: gen meas-dep v} that these mild solutions are weak solutions (in an appropriate sense). The paper is concluded by a section (Section \ref{sec: discussion}) in which we discuss the wider context of our results and the open issues that are subject for follow-up work.

%-- Ik kan prima leven met de conclusie/strekking van het artikel, dat de milde oplossingen zoals in onze eerdere artikelen geintroduceerd voor lineair geval en density dependent velocity field in beide gevallen zwakke oplossingen zijn in de zijn die *jij* introduceert in (3.2), (3.3) en (4.6).
%-- Je kan daar dan (in de Inleiding?) aan toevoegen natuurlijk dat deze correspondentie niet zo eenvoudig in te zien is - blijkbaar! Er zijn hier vele subtiele details door de stopped-flow door het eindige interval, ipv. onbegrensde domeinen als in [Canizo et al.] en [Gwiazda et al.] of zoiets. Wellicht is het een idee om de Inleiding met een dergelijke opmerking te beginnen: dat wij milde oplossingen hebben geintroduceerd en dat de vraag er ligt hoe en in welke zin deze corresponderen met zwakke oplossingen a la [Canizo et al.] en [Gwiazda et al.] etc. Deze vraag beantwoord je dan etc.
%
%(C) Milde oplossingen hebben mi. het sterke voordeel, dat zij direct interpreteerbaar zijn. Het is (mij :-)) duidelijk in beide gevallen hoe je ze krijgt en wat ze betekenen. De zwakke oplossingen laten veel keuzes: - testfunctie ruimte en de specifieke zwakke formulering in relatie daarmee. Welke je neemt is niet direct evident vanuit datgene wat je wil modelleren. Iedereen in het veld kiest de zwakke opzet die leidt tot de milde (omdat dat de 'juiste' zijn?!).

\section{Preliminaries}\label{sec: summ tech} \label{sec: prop stopped flow}
This section contains the preliminaries that are needed for the arguments in this paper. These preliminaries were presented before in \cite{EvHiMu_JDE, EvHiMu_SIMA}. We assume that the reader is familiar with elementary measure-theoretical concepts, such as finite Borel measures, the total variation norm $\|\cdot\|_\TV$, and the dual bounded Lipschitz norm $\|\cdot\|_\BL^*$. An overview of the basic concepts used in this paper can be found in Appendix \ref{sec: basics meas th}.\\
\\
The rest of this section is devoted to properties of the flow induced on $[0,1]$ by some fixed $v\in\BL([0,1])$, a bounded Lipschitz velocity field. This flow is a fundamental mechanism in the model considered in this paper.\\
\\
We assume that a single particle (`individual') is moving in the domain $[0,1]$ deterministically, described by the differential equation for its position $x(t)$ at time $t$:
\begin{equation}\label{eq:indiv flow}
\left\{
  \begin{array}{l}
    \dot{x}(t)=v(x(t)), \\
    x(0)=x_0.
  \end{array}
\right.
\end{equation}
A solution to \eqref{eq:indiv flow} is unique, it exists for time up to reaching the boundary $0$ or $1$ and depends continuously on initial conditions. Let $x(\,\cdot\,;x_0)$ be this solution and $I_{x_0}$ be its \textit{maximal} interval of existence. Define
\[
\tau_\partial(x_0) := \sup I_{x_0} \in [0,\infty],
\]
i.e.~$\tau_\partial(x_0)$ is the time at which the solution starting at $x_0$ reaches the boundary (if it happens) when $x_0$ is an interior point. Note that $\tau_\partial(x_0)=0$ when $x_0$ is a boundary point where $v$ points outwards, while $\tau_\partial(x_0)>0$ when $x_0$ is a boundary point where $v$ vanishes or points inwards.\\
\\
The {\em stopped flow} on $[0,1]$ associated to $v$ is the family of maps $\Phi_t:[0,1]\to[0,1]$, $t\geqs 0$,  defined by
\begin{equation}\label{individualistic flow Phi}
\Phi_t(x_0) := \begin{cases} x(t;x_0),& \quad \mbox{if}\ t\in I_{x_0},\\
x(\tau_\partial(x_0);x_0), & \quad \mbox{otherwise}.
\end{cases}
\end{equation}
%%%%%%%%%%%%%%%
To lift the dynamics to the space of measures, we define $\map{P_t}{\CM([0,1])}{\CM([0,1])}$ by means of the push-forward under $\Phi_t$: for all $\mu\in\mathcal{M}([0,1])$,
\begin{equation}\label{Def Pt push forward}
P_t\mu := \Phi_t \# \mu = \mu\circ\Phi_t^{-1};
\end{equation}
see \eqref{eqn: def push-forw chapter bc}. Clearly, $P_t$ maps positive measures to positive measures and $P_t$ is mass preserving on positive measures. Since the family of maps $(\Phi_t)_{t\geqs 0}$ forms a semigroup, so do the maps $P_t$ in the space $\CM([0,1])$. That is, $(P_t)_{t\geqs0}$ is a {\em Markov semigroup} on $\CM[0,1]$ (cf.~\cite{Lasota-Myjak-Szarek}). The basic estimate
\begin{equation}\label{eqn:TV norm Pt}
\|P_t\mu\|_\TV \leqs \|\mu\|_\TV
\end{equation}
holds for $\mu\in\CM([0,1])$. In \cite[Section 2.2]{EvHiMu_SIMA} a number of other properties (bounds and Lipschitz-like estimates) of $(P_t)_{t\geqs0}$ are given.

\section{Mild and weak solutions for prescribed velocity}\label{sec:mild weak v prescr}
%\label{sec: summ tech Q}
% General f (piecewise BL)
Mild solutions to \eqref{eq:main equation} are defined in the following sense:

\begin{definition}[See {\cite[Definition 2.4]{EvHiMu_JDE}}]\label{def: mild soln fixed v}
A {\em measure-valued mild solution} to the Cauchy-problem associated to \eqref{eq:main equation} on $[0,T]$ with initial value $\nu\in\CM([0,1])$ is a continuous map $\mu:[0,T]\to\CM([0,1])_\BL$ that is $\|\cdot\|_\TV$-bounded and that satisfies the variation of constants formula
\begin{equation}\label{eq:VOC}
\mu_t = P_t\,\nu \ +\ \int_0^t P_{t-s}F_f(\mu_s)\, ds\qquad \mbox{for all}\ t\in [0,T].
\end{equation}
\end{definition}
\noindent Amongst others, we showed in \cite{EvHiMu_JDE} that mild solutions in the sense of Definition~\ref{def: mild soln fixed v} exist and are unique. We repeat those results in the following theorem.

\begin{theorem}[Existence and uniqueness of mild solutions to \eqref{eq:main equation}]\label{thm:exist uniq v prescribed}
Let $f:[0,1]\to\R$ be a piecewise bounded Lipschitz function such that $v(x)\neq 0$ at any point $x$ of discontinuity of $f$. Then for each $T\geqs 0$ and $\mu_0\in\CM([0,1])$ there exists a unique continuous and locally $\|\cdot\|_\TV$-bounded solution to \eqref{eq:VOC}.
\end{theorem}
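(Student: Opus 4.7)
The plan is to prove existence and uniqueness via Banach's fixed-point theorem applied to the right-hand side of \eqref{eq:VOC} in a complete metric space of measure-valued curves equipped with the sup-in-time dual bounded Lipschitz norm. First I would fix $T_0>0$ (to be chosen small) and $R>\|\nu\|_\TV$ and work in
\[X_{T_0,R} := \bigl\{\mu \in C([0,T_0],(\CM([0,1]),\|\cdot\|_\BL^*)) : \mu_0=\nu,\ \sup\nolimits_{t\in[0,T_0]}\|\mu_t\|_\TV \leqs R\bigr\},\]
which is complete under $d(\mu,\mu'):=\sup_{t\in[0,T_0]}\|\mu_t-\mu'_t\|_\BL^*$. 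On $X_{T_0,R}$ define
\[(\mathcal{T}\mu)_t := P_t\nu + \int_0^t P_{t-s}F_f(\mu_s)\,ds,\]
the integral being a Bochner integral in $(\CM([0,1]),\|\cdot\|_\BL^*)$. A preliminary task is to verify measurability and integrability of $s\mapsto P_{t-s}F_f(\mu_s)$, which follows from the continuity of the maps $\mu\mapsto P_t\mu$ and $\mu\mapsto F_f(\mu)$ in $\|\cdot\|_\BL^*$ together with the TV bound $\|F_f(\mu_s)\|_\TV\leqs\|f\|_\infty R$.

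The heart of the argument consists of two Lipschitz estimates:
\[\|P_t\mu - P_t\mu'\|_\BL^* \leqs L_t\,\|\mu-\mu'\|_\BL^*, \qquad \|F_f(\mu) - F_f(\mu')\|_\BL^* \leqs C_f\,\|\mu-\mu'\|_\BL^*.\]
The first follows from the Lipschitz dependence of the stopped flow $\Phi_t$ on initial data, itself a Gronwall consequence of \eqref{eq:indiv flow}. The second, Lipschitz continuity of multiplication by $f$ in the dual-BL norm, is the delicate point: for a merely piecewise Lipschitz $f$ the operator $\mu\mapsto f\cdot\mu$ is in general \emph{not} bounded on $(\CM([0,1]),\|\cdot\|_\BL^*)$, and the hypothesis $v(x)\neq 0$ at every discontinuity of $f$ is precisely what salvages it. At each such point $x_\ast$, transversality of the characteristics through $x_\ast$ prevents any mass in $P_t\CM([0,1])$ from accumulating there, so that $f$ can be approximated effectively by Lipschitz functions on the range of the dynamics and the desired estimate follows.

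With these in hand, the TV bound $\|P_t\mu\|_\TV\leqs\|\mu\|_\TV$ from \eqref{eqn:TV norm Pt} and $\|F_f(\mu)\|_\TV\leqs\|f\|_\infty\|\mu\|_\TV$ show that $\mathcal{T}$ preserves the TV constraint for $T_0$ small, while a standard estimate gives $d(\mathcal{T}\mu,\mathcal{T}\mu')\leqs (L_{T_0}C_fT_0)\,d(\mu,\mu')$, so that $\mathcal{T}$ is a strict contraction on $X_{T_0,R}$ provided $T_0$ is small enough. Banach's theorem then produces a unique mild solution on $[0,T_0]$. Since the permissible step $T_0$ depends only on $\|f\|_\BL$, $\|v\|_\BL$ and $R$, iterating on the successive intervals $[T_0,2T_0],[2T_0,3T_0],\ldots$ extends the solution uniquely to $[0,T]$; the TV bound on each block is controlled a priori by applying $\|\cdot\|_\TV$ to \eqref{eq:VOC} and using Gronwall on $\|\mu_t\|_\TV\leqs\|\nu\|_\TV+\|f\|_\infty\int_0^t\|\mu_s\|_\TV\,ds$, yielding the uniform bound $\|\nu\|_\TV e^{\|f\|_\infty T}$.

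The main obstacle is the dual-BL multiplication estimate for piecewise Lipschitz $f$: everything else is relatively standard once that bound is available, and the structural assumption $v(x)\neq 0$ at discontinuities of $f$ is used in exactly one place, namely to prevent mass from accumulating at the jump points of $f$ under the flow and thereby to recover a finite Lipschitz constant $C_f$ for the perturbation map.
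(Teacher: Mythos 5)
Your overall architecture---Banach's fixed point theorem on $X_{T_0,R}\subset C([0,T_0];\CM([0,1])_\BL)$ with an a priori $\TV$-bound, then Gronwall and concatenation in time---is sound and is essentially the route taken in the reference the paper defers to (\cite[Proposition 3.1]{EvHiMu_JDE}) \emph{for $f\in\BL([0,1])$}: there the needed estimate $\|F_f(\mu)-F_f(\mu')\|_\BL^*\leqs\|f\|_\BL\,\|\mu-\mu'\|_\BL^*$ is immediate from the Banach algebra property \eqref{eqn:Banach algebra}, and your contraction closes. The gap is exactly in the step you flag as delicate. For discontinuous $f$ the pointwise-in-time Lipschitz bound $\|F_f(\mu)-F_f(\mu')\|_\BL^*\leqs C_f\|\mu-\mu'\|_\BL^*$ is \emph{false}, and the hypothesis $v(x_*)\neq 0$ cannot restore it, not even ``on the range of the dynamics'': if $f$ jumps at $x_*$ and $\mu^n=\delta_{x_*}-\delta_{x_*-1/n}$, then $\|\mu^n\|_\BL^*\to 0$ while $\|F_f(\mu^n)\|_\BL^*$ stays bounded away from zero; and Dirac masses sitting exactly at $x_*$ genuinely occur as time slices of the dynamics, since transversality means characteristics pass \emph{through} $x_*$, not that they avoid it. So the constant $C_f$ in your contraction estimate $d(\mathcal{T}\mu,\mathcal{T}\mu')\leqs L_{T_0}C_fT_0\,d(\mu,\mu')$ does not exist.

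What $v(x_*)\neq 0$ actually buys is a statement about \emph{time}, not about individual measures: each characteristic spends at most $O(\eps/\min|v|)$ units of time in an $\eps$-neighbourhood of $x_*$. Accordingly, the object that admits a useful estimate is not $F_f$ itself but the time-integrated operator $\mu_\bullet\mapsto\int_0^t P_{t-s}F_f(\mu_s)\,ds$. The standard repair---and the way \cite[Proposition 3.3]{EvHiMu_JDE} extends Proposition 3.1 from $\BL$ to piecewise $\BL$ right-hand sides---is to approximate $f$ by bounded Lipschitz functions $f_\eps$ agreeing with $f$ outside $\eps$-neighbourhoods of the finitely many jump points, run your argument for each $f_\eps$, and control the discrepancy $\int_0^t P_{t-s}F_{f-f_\eps}(\mu_s)\,ds$ using the fact that the set of times at which mass lies in the region where $f\neq f_\eps$ has Lebesgue measure $O(\eps)$. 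Your proposal needs to be restructured along these lines; as written, the key estimate on which the contraction rests is unavailable.
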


\begin{proof}
See {\cite[Propositions 3.1 and 3.3]{EvHiMu_JDE}} for details.
\end{proof}
\noindent In the rest of this section we will compare mild solutions provided by Theorem \ref{thm:exist uniq v prescribed} to solutions in a different sense: weak solutions. Recall that $\langle\cdot,\cdot\rangle$ denotes the duality paring between finite Borel measures on $[0,1]$ and bounded measurable functions on $[0,1]$; see \eqref{pairing}.

\begin{definition}[Weak solution to \eqref{eq:main equation}]\label{def:weak soln v prescr}
Fix $T\geqs0$, let $v\in\BL([0,1])$ and let $\map{f}{[0,1]}{\R}$ be piecewise bounded Lipschitz. Then $\map{\mu}{[0,T]}{\CM([0,1])}$ is a \emph{weak solution} to \eqref{eq:main equation} corresponding to initial condition $\nu_0\in\CM([0,1])$, if 
\begin{equation}\label{eqn:weak form v prescr}
\pair{\mu_T}{\psi(\cdot,T)}-\pair{\nu_0}{\psi(\cdot,0)} = \int_0^T \pair{\mu_t}{\partial_t\psi(\cdot,t) + \partial_x\psi(\cdot,t)\cdot v} \, dt + \int_0^T \pair{F_f(\mu_t)}{\psi(\cdot,t)}\,dt
\end{equation}
is satisfied for all 
\begin{equation}\label{eqn:weak v=v(x) test funct}
\psi\in \Lambda^T:=\bigg\{\psi\in C^1([0,1]\times[0,T])\,:\, \partial_x\psi(0,t)=\partial_x\psi(1,t) =0 \text{ for all } 0\leqs t \leqs T \bigg\}. % C^_b is not needed since the domain is compact
\end{equation}
\end{definition}
\begin{remark}
Note that the boundary conditions imposed on the test functions are closely related to the behaviour of the stopped flow at the boundary points $x=0$ and $x=1$: no flux. This is a general phenomenon. See for instance \cite[pp.~63--64 and 140]{Taira}, where several spaces of test functions are given, depending on which behaviour at the boundary is to be modelled. For the sake of completeness, we note that \cite{Taira} treats Brownian motion (diffusion). The used operator $A$ is the corresponding infinitesimal generator acting --as is common in probabilistic literature-- \textit{on the test functions}, not on the solution itself.
%{\color{red}Meaning of the boundary conditions on the test functions? Taira? Engel-Nagel (4.23)\\
%Check also refs cited in SIMA ("In the works \cite{Ackleh1, Gwiazda1, Gwiazda2,Gwiazda-Jamroz_ea:2012} %<---removed Ackleh-Fitzpatrick-Thieme, paper not available
%a specific weak solution is constructed that is precisely the mild solution that we obtain by different means. %uniqueness is obtained by constructing the mild solution.
%") for their exact def of weak solutions and choice of test functions.\\
%??Volgens Taira zijn flux-cond op testfuncties de natuurlijke keuze als je sticking boundaries hebt. (daar BM)
%via de adjoint operator? \cite[pp.~64 and 140]{Taira}}
\end{remark}
\noindent The main result of this section is the following theorem.
\begin{theorem}[Mild solutions to \eqref{eq:main equation} are weak solutions]\label{thm:mild is weak v=v(x)}
Let $\map{\mu}{[0,T]}{\CM([0,1])}$ be the mild solution provided by Theorem \ref{thm:exist uniq v prescribed}, corresponding to initial value $\nu_0\in\CM([0,1])$. Then, $\mu$ is a weak solution of \eqref{eq:main equation}.
\end{theorem}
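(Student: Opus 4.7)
The plan is to start from the variation of constants formula
\[
\mu_t = P_t\,\nu_0 + \int_0^t P_{t-s} F_f(\mu_s)\,ds,
\]
pair both sides at $t=T$ with $\psi(\cdot,T)$, and express each piece as a time integral against $\partial_t\psi + v\,\partial_x\psi$ plus a boundary contribution at $t=0$. The condition $\partial_x\psi(0,t)=\partial_x\psi(1,t)=0$ built into $\Lambda^T$ is precisely what is needed to absorb the behaviour of the stopped flow at $x=0$ and $x=1$.

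The key step is to prove the following identity: for every $\nu\in\CM([0,1])$ and every $0\leqs a\leqs b\leqs T$,
\[
\pair{P_{b-a}\nu}{\psi(\cdot,b)} - \pair{\nu}{\psi(\cdot,a)} = \int_a^b \pair{P_{t-a}\nu}{\partial_t\psi(\cdot,t) + v\,\partial_x\psi(\cdot,t)}\,dt. \qquad (\ast)
\]
By definition of the push-forward, $\pair{P_t\nu}{\phi}=\int \phi(\Phi_t(x))\,d\nu(x)$, so it is enough to show that for each $x\in[0,1]$ the real-valued map $t\mapsto \psi(\Phi_{t-a}(x),t)$ is absolutely continuous on $[a,b]$ with derivative a.e.\ equal to $\partial_t\psi(\Phi_{t-a}(x),t) + v(\Phi_{t-a}(x))\,\partial_x\psi(\Phi_{t-a}(x),t)$. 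For $t<a+\tau_\partial(x)$ this follows from the chain rule together with $\tfrac{d}{dt}\Phi_{t-a}(x)=v(\Phi_{t-a}(x))$. For $t>a+\tau_\partial(x)$ the orbit has stopped at a boundary point, so $\Phi_{t-a}(x)\in\{0,1\}$; by the constraint in $\Lambda^T$, $\partial_x\psi(\Phi_{t-a}(x),t)=0$, and the true derivative reduces to $\partial_t\psi(\Phi_{t-a}(x),t)$, which agrees with the displayed formula because the second summand vanishes. Integrating this pointwise identity over $[a,b]$ and then against $\nu$ via Fubini yields $(\ast)$.

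Assembling the weak formulation is then straightforward bookkeeping. Applying $(\ast)$ with $(a,b,\nu)=(0,T,\nu_0)$ rewrites $\pair{P_T\nu_0}{\psi(\cdot,T)}$, and for each fixed $s\in[0,T]$ applying $(\ast)$ with $(a,b,\nu)=(s,T,F_f(\mu_s))$ rewrites $\pair{P_{T-s}F_f(\mu_s)}{\psi(\cdot,T)}$ as $\pair{F_f(\mu_s)}{\psi(\cdot,s)}$ plus a $t$-integral from $s$ to $T$. Substituting these into the variation of constants formula and swapping the order of the $s$- and $t$-integrations by Fubini, one recognises $\int_0^t P_{t-s}F_f(\mu_s)\,ds$ inside the resulting $t$-integral. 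Hence the time-derivative parts combine into $\int_0^T \pair{\mu_t}{\partial_t\psi + v\,\partial_x\psi}\,dt$, while the boundary-term contributions total exactly $\int_0^T \pair{F_f(\mu_s)}{\psi(\cdot,s)}\,ds$. This is precisely \eqref{eqn:weak form v prescr}.

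The main technical obstacle is the regularity bookkeeping: joint measurability of $(t,x)\mapsto \Phi_t(x)$ (available from the continuous dependence on initial data recalled in Section~\ref{sec: summ tech}) is needed for Fubini, and uniform bounds on all integrands are required for the dominated interchange. The locally $\|\cdot\|_\TV$-bounded solution from Theorem~\ref{thm:exist uniq v prescribed}, together with the boundedness of $f$, of $v$, and of $\psi$ and its partial derivatives on the compact set $[0,1]\times[0,T]$, supplies these bounds. The piecewise character of $f$ poses no separate difficulty because, by assumption, $v$ is nonzero at the discontinuity points of $f$, so each characteristic spends Lebesgue-zero time at such a point and $s\mapsto f(\Phi_s(x))$ is defined almost everywhere.
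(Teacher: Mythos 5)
Your proposal is correct and follows essentially the same route as the paper's proof: the pointwise splitting of $t\mapsto\psi(\Phi_{t-a}(x),t)$ at the stopping time $\tau_\partial(x)$, the chain rule before it, and the Neumann condition $\partial_x\psi(0,t)=\partial_x\psi(1,t)=0$ after it are exactly the mechanism used in \eqref{eqn:split before after tau}--\eqref{eqn:int perturb interior}, and the recombination via Fubini and the variation of constants formula matches \eqref{eqn:combine nearly def weak}. Your only departure is organizational --- you package the two parallel computations (for $P_t\nu_0$ and for $P_{t-s}F_f(\mu_s)$) into a single identity $(\ast)$ applied with different parameters $(a,b,\nu)$, which is a clean unification but not a different argument; your closing remark about characteristics spending zero time at discontinuities of $f$ is not actually needed here, since $f$ enters only through the measure $F_f(\mu_s)=f\cdot\mu_s$.
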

\noindent For the proof of Theorem \ref{thm:mild is weak v=v(x)} we were inspired by the proof of \cite[Proposition 3.7]{Hoogwater-thesis}.
\begin{proof}

Let $\psi$ be an arbitrary element from the set of test functions given in \eqref{eqn:weak v=v(x) test funct}. Recall that $I_y$ is the maximal interval of existence of a solution to \eqref{eq:indiv flow} --i.e. restricted to $[0,1]$-- with initial condition $y$. Recall moreover that $\tau_\partial(y) = \sup I_{y}$, i.e.~$\tau_\partial(y)$ is the time at which the solution starting at $y$ reaches the boundary (if it happens) when $y$ is an interior point. Note that $\tau_\partial(y)=0$ when $y$ is a boundary point where $v$ points outwards, while $\tau_\partial(y)>0$ when $y$ is a boundary point where $v$ vanishes or points inwards.\\ Consider
\begin{align}\label{eqn:split before after tau}
\nonumber &\int_0^T \pair{P_t\,\nu_0}{\partial_x\psi(\cdot,t)\cdot v + \partial_t\psi(\cdot,t)}\,dt \\
\nonumber &\hspace{0.15\linewidth}= \int_0^T \pair{\nu_0}{\partial_x\psi(\Phi_t(\cdot),t)\cdot v(\Phi_t(\cdot)) + \partial_t\psi(\Phi_t(\cdot),t)}\,dt\\
\nonumber &\hspace{0.15\linewidth}= \int_{[0,1]} \int_0^T \bigg(\partial_x\psi(\Phi_t(y),t)\cdot v(\Phi_t(y)) + \partial_t\psi(\Phi_t(y),t)\bigg)\,dt\,d\nu_0(y)\\
\nonumber &\hspace{0.15\linewidth}= \int_{[0,1]} \int_0^{\tau_\partial(y)\wedge T} \dfrac{d}{dt}\psi(\Phi_t(y),t)\,dt\,d\nu_0(y) \\
&\hspace{0.15\linewidth}\,\,\,\,+ \int_{[0,1]} \int_{\tau_\partial(y)\wedge T}^T \bigg(\partial_x\psi(\Phi_t(y),t)\cdot v(\Phi_t(y)) + \partial_t\psi(\Phi_t(y),t)\bigg)\,dt\,d\nu_0(y),
\end{align}
where the \emph{truncation} is defined as $\tau_\partial(y)\wedge T:=\min(\tau_\partial(y), T)$; this is a continuous function in $y$. Interchanging the order of integration is allowed by Fubini's theorem, because the integrand is bounded. %{\color{red}(justify here and later why it is allowed to interchange the order of integration-- appendix JDE? See also Brezis, p.91: Fubini)}
The subdivision of the domain $[0,T]$ with respect to $\tau_\partial$ is necessary, since the semigroup $\Phi_t$ represents the \emph{stopped} flow and therefore the identity $\frac{d}{dt}\Phi_t(y)=v(\Phi_t(y))$ is only valid if $t\in I_y$. Hence, only in the first integral on the right-hand side of \eqref{eqn:split before after tau}, the chain rule
\begin{equation}
\dfrac{d}{dt}\psi(\Phi_t(y),t) = \partial_x\psi(\Phi_t(y),t)\cdot v(\Phi_t(y)) + \partial_t\psi(\Phi_t(y),t)
\end{equation}
can be used. Note that at time $\tau_\partial(y)\wedge T$ this identity at least holds one-sidedly as $t\nearrow (\tau_\partial(y)\wedge T)$, which is sufficient for the first integral on the right-hand side to be correct.\\
%{\color{red}Is function $\tau_\partial(y)\wedge T$ sufficiently nice (measurable) w.r.t. $y$ for integration?}\\
\\
Define, for $z\in\{0,1\}$, the sets
\begin{equation}\label{eqn:def Omega}
\Omega_z^T := \{ y\in[0,1] \,:\, \tau_\partial(y)<T \text{ and } \Phi_{\tau_\partial(y)}(y)=z \}.
\end{equation} 
These are connected subsets of $[0,1]$.\\
If $y\in \Omega^T:=[0,1]\setminus\left(\Omega_0^T\cup\Omega_1^T\right)$, then $\tau_\partial(y) \geqs T$, and obviously
\begin{equation}\label{eqn:integrand zero if tau > T}
\int_{\tau_\partial(y)\wedge T}^T \bigg(\partial_x\psi(\Phi_t(y),t)\cdot v(\Phi_t(y)) + \partial_t\psi(\Phi_t(y),t)\bigg)\,dt\ = 0
\end{equation}
in the second integral on the right-hand side of \eqref{eqn:split before after tau}, since the domain of integration is a nullset (in fact, a single point).\\
If $y\in \Omega_0^T\cup\Omega_1^T$, then $\tau_\partial(y) < T$ and $\Phi_t(y)\in\{0,1\}$ for all $(\tau_\partial(y)\wedge T) \leqs t \leqs T$. Therefore, taking into account the test functions' boundary conditions $\partial_x\psi(0,t)=\partial_x\psi(1,t)=0$, we have that 
\begin{equation}\label{eqn:bdry cond for parts on bdry}
\partial_x\psi(\Phi_t(y),t) = 0
\end{equation}
for all $y\in \Omega_0^T\cup\Omega_1^T$ and $(\tau_\partial(y)\wedge T) \leqs t \leqs T$.\\
\\
Due to \eqref{eqn:integrand zero if tau > T} and \eqref{eqn:bdry cond for parts on bdry}, the second term on the right-hand side of \eqref{eqn:split before after tau} can be written as
\begin{align}\label{eqn:int after tau}
\nonumber & \int_{[0,1]} \int_{\tau_\partial(y)\wedge T}^T \bigg(\partial_x\psi(\Phi_t(y),t)\cdot v(\Phi_t(y)) + \partial_t\psi(\Phi_t(y),t)\bigg)\,dt\,d\nu_0(y)\\
\nonumber & \hspace{0.2\linewidth} = \int_{\Omega_0^T} \int_{\tau_\partial(y)}^T \partial_t\psi(0,t)\,dt\,d\nu_0(y)+ \int_{\Omega_1^T} \int_{\tau_\partial(y)}^T \partial_t\psi(1,t)\,dt\,d\nu_0(y)\\
\nonumber & \hspace{0.2\linewidth} = \int_{\Omega_0^T} \bigg(\psi(\underbrace{0}_{=\Phi_T(y)},T)-\psi\big(0,\tau_\partial(y)\big)\bigg)\,d\nu_0(y)\\
& \hspace{0.2\linewidth} \,\,\,\,+ \int_{\Omega_1^T} \bigg(\psi(\underbrace{1}_{=\Phi_T(y)},T)-\psi\big(1,\tau_\partial(y)\big)\bigg)\,d\nu_0(y).
\end{align}
The first term on the right-hand side of \eqref{eqn:split before after tau} we treat as follows:
\begin{align}\label{eqn:int before tau}
\nonumber &\int_{[0,1]} \int_0^{\tau_\partial(y)\wedge T} \dfrac{d}{dt}\psi(\Phi_t(y),t)\,dt\,d\nu_0(y) \\
\nonumber & \hspace{0.15\linewidth} = \int_{\Omega^T} \int_0^{T} \dfrac{d}{dt}\psi(\Phi_t(y),t)\,dt\,d\nu_0(y)\\
\nonumber &\hspace{0.15\linewidth}\,\,\,\,+\int_{\Omega_0^T} \int_0^{\tau_\partial(y)} \dfrac{d}{dt}\psi(\Phi_t(y),t)\,dt\,d\nu_0(y)+ \int_{\Omega_1^T} \int_0^{\tau_\partial(y)} \dfrac{d}{dt}\psi(\Phi_t(y),t)\,dt\,d\nu_0(y)\\
\nonumber & \hspace{0.15\linewidth} = \int_{\Omega^T} \bigg(\psi\big(\Phi_T(y),T\big)-\psi(y,0)\bigg)\,d\nu_0(y)\\
&\hspace{0.15\linewidth}\,\,\,\,+\int_{\Omega_0^T} \bigg(\psi(0,\tau_\partial(y))-\psi(y,0)\bigg)\,d\nu_0(y)+\int_{\Omega_1^T} \bigg(\psi(1,\tau_\partial(y))-\psi(y,0)\bigg)\,d\nu_0(y).
\end{align}
Note that, for \textit{all} $y\in[0,1]$, the function $t\mapsto \psi(\Phi_t(y),t)$ is differentiable for all $t\geqs0$. If $t=\tau_\partial(y)$, then the differentiability follows from the boundary conditions on $\psi$.\\
\\
Combining \eqref{eqn:split before after tau} with \eqref{eqn:int after tau} and \eqref{eqn:int before tau}, we obtain that
\begin{align}\label{eqn:pair map result}
\nonumber \int_0^T \pair{P_t\,\nu_0}{\partial_x\psi(\cdot,t)\cdot v + \partial_t\psi(\cdot,t)}\,dt =& \int_{[0,1]} \bigg(\psi\big(\Phi_T(y),T\big)-\psi(y,0)\bigg)\,d\nu_0(y)\\
=& \pair{P_T\,\nu_0}{\psi(\cdot,T)} - \pair{\nu_0}{\psi(\cdot,0)}.
\end{align}
Next, consider
\begin{align}\label{eqn:pair perturb}
\nonumber &\int_0^T \pair{\int_0^t P_{t-s}F_f(\mu_s)\,ds}{\partial_x\psi(\cdot,t)\cdot v + \partial_t\psi(\cdot,t)}\,dt \\
\nonumber & \hspace{0.1\linewidth} =  \int_0^T \int_0^t \pair{F_f(\mu_s)}{\partial_x\psi(\Phi_{t-s}(\cdot),t)\cdot v(\Phi_{t-s}(\cdot)) + \partial_t\psi(\Phi_{t-s}(\cdot),t)}\,ds\,dt\\
%\nonumber & \hspace{0.1\linewidth} =  \int_0^T \int_s^T \pair{F_f(\mu_s)}{\partial_x\psi(\Phi_{t-s}(\cdot),t)\cdot v(\Phi_{t-s}(\cdot)) + \partial_t\psi(\Phi_{t-s}(\cdot),t)}\,dt\,ds \text{\color{red} leave this line out?}\\
& \hspace{0.1\linewidth} =  \int_0^T \pair{F_f(\mu_s)}{\int_s^T \bigg(\partial_x\psi(\Phi_{t-s}(\cdot),t)\cdot v(\Phi_{t-s}(\cdot)) + \partial_t\psi(\Phi_{t-s}(\cdot),t)\bigg)\,dt}\,ds
\end{align}
We subdivide the domain of the spatial integration into $\Omega_0^{T-s}$, $\Omega_1^{T-s}$ and $\Omega^{T-s}:=[0,1]\setminus(\Omega_0^{T-s}\cup\Omega_1^{T-s})$, with the sets $\Omega_z^{T-s}$ defined analogous to \eqref{eqn:def Omega}.\\
For each $z\in\{0,1\}$, we have
\begin{align}\label{eqn:int perturb hit bdry}
\nonumber & \int_0^T \int_{\Omega_z^{T-s}} \int_s^T \bigg(\partial_x\psi(\Phi_{t-s}(y),t)\cdot v(\Phi_{t-s}(y)) + \partial_t\psi(\Phi_{t-s}(y),t)\bigg)\,dt \, dF_f(\mu_s)(y)\,ds \\
\nonumber & \hspace{0.2\linewidth} =  \int_0^T \int_{\Omega_z^{T-s}} \int_s^{\tau_\partial(y)+s} \dfrac{d}{dt}\psi(\Phi_{t-s}(y),t)\,dt \, dF_f(\mu_s)(y)\,ds \\
\nonumber & \hspace{0.2\linewidth} \,\,\,\, + \int_0^T \int_{\Omega_z^{T-s}} \int_{\tau_\partial(y)+s}^{T} \bigg(\underbrace{\partial_x\psi(z,t)}_{=0}\cdot v(z) + \underbrace{\partial_t\psi(z,t)}_{=\frac{d}{dt}\psi(z,t)}\bigg)\,dt \, dF_f(\mu_s)(y)\,ds\\
\nonumber & \hspace{0.2\linewidth} =  \int_0^T \int_{\Omega_z^{T-s}} \bigg(\psi\big(z,\tau_\partial(y)+s\big)- \psi(y,s) \bigg) \, dF_f(\mu_s)(y)\,ds \\
\nonumber & \hspace{0.2\linewidth} \,\,\,\, + \int_0^T \int_{\Omega_z^{T-s}} \bigg(\psi(z,T)-\psi\big(z,\tau_\partial(y)+s\big)\bigg) \, dF_f(\mu_s)(y)\,ds\\
& \hspace{0.2\linewidth} =  \int_0^T \int_{\Omega_z^{T-s}} \bigg(\psi\big(\Phi_{T-s}(y),T\big)- \psi(y,s) \bigg) \, dF_f(\mu_s)(y)\,ds.
\end{align}
Considering the spatial domain of integration $\Omega^{T-s}$, we find
\begin{align}\label{eqn:int perturb interior}
\nonumber & \int_0^T \int_{\Omega^{T-s}} \int_s^T \bigg(\partial_x\psi(\Phi_{t-s}(y),t)\cdot v(\Phi_{t-s}(y)) + \partial_t\psi(\Phi_{t-s}(y),t)\bigg)\,dt \, dF_f(\mu_s)(y)\,ds \\
\nonumber & \hspace{0.2\linewidth} =  \int_0^T \int_{\Omega^{T-s}} \int_s^{T} \dfrac{d}{dt}\psi(\Phi_{t-s}(y),t)\,dt \, dF_f(\mu_s)(y)\,ds \\
& \hspace{0.2\linewidth} =  \int_0^T \int_{\Omega^{T-s}} \bigg(\psi\big(\Phi_{T-s}(y),T\big)- \psi(y,s) \bigg) \, dF_f(\mu_s)(y)\,ds.
\end{align}
Together, \eqref{eqn:pair perturb}, \eqref{eqn:int perturb hit bdry} and \eqref{eqn:int perturb interior} yield
\begin{align}\label{eqn:pair perturb result}
\nonumber \int_0^T \pair{\int_0^t P_{t-s}F_f(\mu_s)\,ds}{\partial_x\psi(\cdot,t)\cdot v + \partial_t\psi(\cdot,t)}\,dt =&
\int_0^T \pair{F_f(\mu_s)}{\psi\big(\Phi_{T-s}(\cdot),T\big)}\,ds\\
& - \int_0^T\pair{F_f(\mu_s)}{\psi(\cdot,s)}\,ds.
\end{align}
It follows from \eqref{eqn:pair map result} and \eqref{eqn:pair perturb result}, and from the variation of constants formula \eqref{eq:VOC} that
\begin{align}\label{eqn:combine nearly def weak}
\nonumber &\int_0^T \pair{\mu_t}{\partial_x\psi(\cdot,t)\cdot v + \partial_t\psi(\cdot,t)}\,dt = \pair{P_T\,\nu_0}{\psi(\cdot,T)} - \pair{\nu_0}{\psi(\cdot,0)}\\
& \hspace{0.25\linewidth}+ \int_0^T \pair{F_f(\mu_s)}{\psi\big(\Phi_{T-s}(\cdot),T\big)}\,ds - \int_0^T\pair{F_f(\mu_s)}{\psi(\cdot,s)}\,ds.
\end{align}
Note that
\begin{align*}
 \int_0^T \pair{F_f(\mu_s)}{\psi\big(\Phi_{T-s}(\cdot),T\big)}\,ds =&   \int_0^T \pair{P_{T-s}\,F_f(\mu_s)}{\psi(\cdot,T)}\,ds\\
 =& \pair{\int_0^T P_{T-s}\,F_f(\mu_s)\,ds}{\psi(\cdot,T)},
\end{align*}
and hence
\begin{equation*}
\pair{P_T\,\nu_0}{\psi(\cdot,T)} + \int_0^T \pair{F_f(\mu_s)}{\psi\big(\Phi_{T-s}(\cdot),T\big)}\,ds = \pair{\mu_T}{\psi(\cdot,T)}.
\end{equation*}
Equation \eqref{eqn:combine nearly def weak} can thus be written as
\begin{equation*}
\nonumber \int_0^T \pair{\mu_t}{\partial_x\psi(\cdot,t)\cdot v + \partial_t\psi(\cdot,t)}\,dt = \pair{\mu_T}{\psi(\cdot,T)} - \pair{\nu_0}{\psi(\cdot,0)} - \int_0^T\pair{F_f(\mu_s)}{\psi(\cdot,s)}\,ds,
\end{equation*}
which shows that $\mu$ is a weak solution of \eqref{eq:main equation}.
\end{proof}

\section{Mild and weak solutions for measure-dependent velocity}\label{sec: gen meas-dep v}
In this section we summarize the results of \cite{EvHiMu_SIMA} and compare the concept of mild solutions from \cite{EvHiMu_SIMA} to weak solutions of \eqref{eqn:main equation nonlin}. In \cite{EvHiMu_SIMA}, we generalized the assumptions on $v$ from \cite{EvHiMu_JDE} in the following way to measure-dependent velocity fields:
\begin{assumption}[Assumptions on the measure-dependent velocity field]\label{ass: v properties}
Assume that $\map{v}{\CM([0,1])\times[0,1]}{\R}$ is a mapping such that:
\begin{enumerate}
  \item[(i)] $v[\mu]\in\BL([0,1])$, for each $\mu\in\CM([0,1])$.\label{ass: part v meas-dep in BL}
\end{enumerate}
Furthermore, assume that for any $R>0$ there are constants $K_R$, $L_R$, $M_R$ such that for all $\mu,\nu\in\CM([0,1])$ satisfying $\|\mu\|_\TV\leqslant R$ and $\|\nu\|_\TV\leqslant R$, the following estimates hold:
\begin{enumerate}
  \item[(ii)] $\|v[\mu]\|_\infty\leqs K_R$,\label{ass: part unif bnd}
  \item[(iii)] $|\,v[\mu]\,|_\Lip\leqs L_R$, \,\,\, and\label{ass: part Lipsch in x}
  \item[(iv)] $\|v[\mu]-v[\nu]\|_\infty \leqslant M_R\,\|\mu-\nu\|^*_\BL$.\label{ass: part Lipsch in mu incl R}
\end{enumerate}
\end{assumption}
\noindent In \cite{EvHiMu_SIMA}, we proved well-posedness of \eqref{eqn:main equation nonlin}:
\begin{equation*}
\frac{\partial}{\partial t}\mu_t + \frac{\partial}{\partial x} (v[\mu_t]\,\mu_t) = F_f(\mu_t)
\end{equation*}
on $[0,1]$, in the sense of mild solutions; cf.~Definition \ref{def: mild soln nonlin} and Theorem \ref{thm: exist uniq nonlin}. Like in \cite{EvHiMu_SIMA}, in this paper we restrict ourselves to $f$ that is bounded Lipschitz on $[0,1]$. See Section \ref{sec: discussion} for further discussion on this assumption.\\
\\
Mild solutions for measure-dependent $v=v[\mu]$ are defined using mild solutions for fixed $v\in\BL([0,1])$ as a building block via an Euler-like approach. We first summarize the required notation used in \cite{EvHiMu_SIMA}.\\
\\ 
Let $v\in\BL([0,1])$ and $f\in\BL([0,1])$ be arbitrary. For all $t\geqs0$, we define $\map{Q_t}{\CM([0,1])}{\CM([0,1])}$ to be the operator that maps the initial condition to the solution in the sense of Definition \ref{def: mild soln fixed v}. Theorem \ref{thm:exist uniq v prescribed} guarantees that this operator is well-defined and continuous for $\|\cdot\|^*_\BL$. Moreover, $Q$ preserves positivity, due to \cite[Corollary 3.4]{EvHiMu_JDE}. The set of operators $(Q_t)_{t\geqs0}$ constitutes a semigroup and has useful other properties, like certain Lipschitz estimates. The exact results and their proofs can be found in \cite[Section 2.3]{EvHiMu_SIMA}.\\
In the sequel we will write e.g.~$Q^v$ and $Q^{v'}$ to distinguish between the semigroups $Q$ on $\CM([0,1])$ associated to $v\in\BL([0,1])$  and $v'\in\BL([0,1])$, respectively.\\
\\
We now introduce the aforementioned forward-Euler-like approach to construct approximate solutions. Let $T>0$ be given. Let $N\geqslant 1$ be fixed and define a set $\alpha\subset [0,T]$ as follows: 
\begin{equation}\label{eqn: partition}
\alpha:= \big\{ t_j\in[0,T] \;\, : \;\, 0\leqslant j \leqslant N,\; t_0=0,\; t_{N}=T,\; t_j<t_{j+1}  \big\},
\end{equation}
which we call a \textit{partition} of the interval $[0,T]$. Here, $N$ denotes the number of \textit{subintervals} in $\alpha$.\\
\\
Let $\mu_0\in\CM([0,1])$ be fixed. For a given partition $\alpha:=\{t_0,\ldots,t_N\}\subset[0,T]$, define a measure-valued trajectory $\mu \in C([0,T];\CM([0,1]))$ by %Separate definition k=0 (deviating from k>0) not necessary
\begin{equation}\label{eqn: Euler scheme}
\left\{
  \begin{array}{ll}
    \mu_t := Q^{v_j}_{t-t_j}\,\mu_{t_j},&\text{if }t\in(t_j,t_{j+1}];\vspace{0.2cm} \\ %P^{v_j}_{t-t_j}\mu_{t_j}+\ \displaystyle\int_{t_j}^t P^{v_j}_{t-s}F_f(\mu_s)\, ds,&\text{if }t\in(t_j,t_{j+1}];\\
    v_j:=v[\mu_{t_j}];&\vspace{0.2cm}\\
    \mu_{t=0}=\mu_0,&
  \end{array}
\right.
\end{equation}
for all $j\in\{0,\ldots,N-1\}$. Here, $(Q^v_t)_{t\geqslant0}$ denotes the semigroup introduced above. Note that by Assumption \ref{ass: v properties}, Part (i), $v_j=v[\mu_{t_j}]\in\BL([0,1])$ for each $j$.\\     %of the stopped flow associated to any $v\in\BL([0,1])$ as given by \eqref{Def Pt push forward}.\\
\\
We call this a forward-Euler-like approach, because it is the analogon of the forward Euler method for ODEs (cf.~e.g.~\cite[Chapter 2]{Butcher}). See \cite[Section 3]{EvHiMu_SIMA} for further explanation.\\
\\
The conditions in Parts (ii)--(iv) %\ref{ass: part unif bnd}--\ref{ass: part Lipsch in mu incl R} 
of Assumption \ref{ass: v properties} are only required to hold for measures in a $\TV$-norm bounded set, in view of the following lemma:

\begin{lemma}\label{lem: set timeslices bdd}
Let $\mu_0\in\CM([0,1])$ be given and let $\map{v}{\CM([0,1])\times[0,1]}{\R}$ satisfy Assumption \ref{ass: v properties}(i). For a given partition $\alpha:=\{t_0,\ldots,t_N\}\subset[0,T]$, let $\mu \in C([0,T];\CM([0,1]))$ be defined by \eqref{eqn: Euler scheme}. Then for all $t\in[0,T]$
\begin{equation*}
\|\mu_t\|^*_\BL  \;\leqs\; \|\mu_t\|_\TV \;\leqslant\; \|\mu_0\|_\TV \,\exp(\|f\|_\infty\,T).
\end{equation*}
This bound is in particular independent of $t$, $N$ and the distribution of points within $\alpha$.
\end{lemma}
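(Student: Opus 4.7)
The first inequality $\|\mu_t\|_\BL^*\leqs\|\mu_t\|_\TV$ is just the standard comparison between the two norms on $\CM([0,1])$, so the entire content of the lemma is the exponential bound on the total variation. The plan is to estimate $\|\cdot\|_\TV$ subinterval by subinterval using the variation of constants formula, then telescope across the partition.

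Concretely, fix $j\in\{0,\dots,N-1\}$ and $t\in(t_j,t_{j+1}]$. By construction $\mu_t=Q^{v_j}_{t-t_j}\mu_{t_j}$, which by Definition~\ref{def: mild soln fixed v} means
\begin{equation*}
\mu_t \;=\; P^{v_j}_{t-t_j}\mu_{t_j}\;+\;\int_{t_j}^t P^{v_j}_{t-s}F_f(\mu_s)\,ds.
\end{equation*}
Applying $\|\cdot\|_\TV$, using the contraction estimate \eqref{eqn:TV norm Pt} for the stopped-flow push-forward, and using $\|F_f(\mu)\|_\TV=\|f\cdot\mu\|_\TV\leqs\|f\|_\infty\|\mu\|_\TV$, I obtain
\begin{equation*}
\|\mu_t\|_\TV \;\leqs\; \|\mu_{t_j}\|_\TV + \|f\|_\infty\int_{t_j}^t \|\mu_s\|_\TV\,ds.
\end{equation*}
Grönwall's inequality on $[t_j,t_{j+1}]$ then gives the one-step estimate
\begin{equation*}
\|\mu_t\|_\TV \;\leqs\; \|\mu_{t_j}\|_\TV\,\exp\!\big(\|f\|_\infty(t-t_j)\big).
\end{equation*}

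Taking $t=t_{j+1}$ and iterating over $j=0,1,\dots,N-1$ telescopes to
\begin{equation*}
\|\mu_{t_j}\|_\TV \;\leqs\; \|\mu_0\|_\TV \,\exp\!\big(\|f\|_\infty\, t_j\big),
\end{equation*}
and feeding this back into the one-step bound for an arbitrary $t\in(t_j,t_{j+1}]$ yields
$\|\mu_t\|_\TV\leqs\|\mu_0\|_\TV\exp(\|f\|_\infty t)\leqs\|\mu_0\|_\TV\exp(\|f\|_\infty T)$.

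There is no real obstacle here; the only thing to be careful about is that the per-subinterval growth factor depends only on the length $t_{j+1}-t_j$, so that the product across subintervals collapses to $\exp(\|f\|_\infty\,T)$ regardless of $N$ or the placement of the points $t_j$. This is what makes the bound genuinely uniform in the partition, as claimed, and it is also what makes the bound usable later when one sends the mesh size to zero in the Euler scheme.
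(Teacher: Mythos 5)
Your proposal is correct and follows essentially the same route as the paper: the paper's proof (deferred to \cite[Lemma 3.4]{EvHiMu_SIMA}, with the underlying argument sketched in Appendix \ref{app:conv pair f not BL}) likewise applies the variation of constants formula with the contraction estimate \eqref{eqn:TV norm Pt} and $\|F_f(\nu)\|_\TV\leqs\|f\|_\infty\|\nu\|_\TV$, invokes Gr\"onwall on each subinterval, and chains the resulting factors $\exp(\|f\|_\infty(t_{j+1}-t_j))$ across the partition so that the exponents telescope to $\|f\|_\infty T$. Your explicit remark that the product of the per-subinterval growth factors is independent of $N$ and of the placement of the $t_j$ is exactly the point of the lemma.
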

\begin{proof}
See the proof of \cite[Lemma 3.4]{EvHiMu_SIMA} for details.
\end{proof}
\noindent We construct sequences of Euler approximations, each following from a sequence of partitions $(\alpha_k)_{k\in\N}$ that satisfies the following assumption:
\begin{assumption}[Assumptions on the sequence of partitions]\label{ass: partition}
Let $(\alpha_k)_{k\in\N}$ be a sequence of partitions of $[0,T]$ and let $(N_k)_{k\in\N}\subset\N$ be the corresponding sequence such that each $\alpha_k$ is of the form
\begin{equation}\label{eqn: partition alpha_k}
\alpha_k:= \big\{ t^k_j\in[0,T] \;\, : \;\, 0\leqslant j \leqslant N_k,\; t^k_0=0,\; t^k_{N_k}=T,\; t^k_j<t^k_{j+1}  \big\}.
\end{equation}
Define 
\begin{equation}
M^{(k)}:= \max_{j\in\{0,\ldots,N_k-1\}}t^k_{j+1}-t^k_j \label{eqn: def max interval}
\end{equation}
for all $k\in\N$. Assume that the sequence $(M^{(k)})_{k\in\N}$ is nonincreasing and $M^{(k)}\to 0$ as $k\to \infty$.
%\begin{enumerate}
%  \item[(i)] the sequence $(N_k)_{k\in\N}$ is nondecreasing; {\color{red}\bf NEEDED?!}
%  \item[(ii)] the sequence $(M^{(k)})_{k\in\N}$ is nonincreasing and $M^{(k)}\to 0$ as $k\to \infty$.
%\end{enumerate}
\end{assumption}
\noindent A mild solution is defined as follows:
\begin{definition}[See {\cite[Definition 3.8]{EvHiMu_SIMA}}]\label{def: mild soln nonlin}
Let the space of continuous maps from $[0,T]$ to $\CM([0,1])$ be endowed with the metric defined for all $\mu,\nu\in C([0,T]; \CM([0,1]))$ by
\begin{equation}\label{eqn: metric sup BL norm}
\sup_{t\in[0,T]}\|\mu_t-\nu_t\|^*_\BL.
\end{equation}
Let $(\alpha_k)_{k\in\N}$ be a sequence of partitions satisfying Assumption \ref{ass: partition}. For each $k\in\N$, let $\mu^k\in C([0,T];\CM([0,1]))$ be defined by \eqref{eqn: Euler scheme} with partition $\alpha_k$. Then, for any such sequence of partitions $(\alpha_k)_{k\in\N}$, any limit of a subsequence of $(\mu^k)_{k\in\N}$ is called a \textit{(measure-valued) mild solution} of \eqref{eqn:main equation nonlin}.
\end{definition}
\noindent The name \textit{mild solutions} is appropriate, first of all because they are constructed from piecewise mild solutions in the sense of Definition \ref{def: mild soln fixed v}.\\
Moreover, as their name suggests, \textit{weak} solutions in general constitute a weaker solution concept than mild solutions. That is, a mild solution (meaning: a solution of the variation of constants formula) is in general necessarily a weak solution.  See e.g.~\cite[p.~4--5]{Evers_PhD}, \cite[Proposition 3.7]{Hoogwater-thesis}, Theorem \ref{thm:mild is weak v=v(x)} in this paper, and the way in which mild and weak solutions are connected on \cite[pp.~258--259]{Pazy}. In all of these references, the equations treated are simpler than \eqref{eqn:main equation nonlin} that is considered in this section. Here, mild solutions are the ones constructed in \cite{EvHiMu_SIMA}, being defined as the limit of Euler approximations. They are not solutions of the variation of constants formula. However, they are still elements of the set of weak solutions, as we will show in Theorem \ref{thm:mild is weak v=v(mu)}. This implication is an extra justification for the name \textit{mild} solutions.\\
\\
In the rest of this paper we focus on \textit{positive} measure-valued solutions, because these are the only physically relevant solutions in many applications. %(cf.~Remark \ref{rem: pos meas interpretable}). 
\begin{theorem}[Existence and uniqueness of mild solutions to \eqref{eqn:main equation nonlin}]\label{thm: exist uniq nonlin}
Let $\mu_0\in\CM^+([0,1])$ be given and let $\map{v}{\CM([0,1])\times[0,1]}{\R}$ satisfy Assumption \ref{ass: v properties}. Endow the space $C([0,T];\CM([0,1]))$ with the metric defined by \eqref{eqn: metric sup BL norm}. Then, there is a unique element of $C([0,T]; \CM^+([0,1]))$ with initial condition $\mu_0$, that is a mild solution in the sense of Definition \ref{def: mild soln nonlin}. That is, for each sequence of partitions $(\alpha_k)_{k\in\N}$ satisfying Assumption \ref{ass: partition}, the corresponding sequence $(\mu^k)_{k\in\N}$ defined by \eqref{eqn: Euler scheme} is a sequence in $C([0,T];\CM^+([0,1]))$ and has a unique limit as $k\to\infty$.\\
Moreover, this limit is independent of the choice of $(\alpha_k)_{k\in\N}$.
\end{theorem}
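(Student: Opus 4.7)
The plan is to verify four things in turn: (i) each Euler iterate $\mu^k$ stays in $\CM^+([0,1])$ and has a uniformly bounded $\TV$-norm; (ii) the sequence $(\mu^k)_{k\in\N}$ is Cauchy in the metric \eqref{eqn: metric sup BL norm}; (iii) the limit is independent of the particular sequence $(\alpha_k)$; (iv) positivity is preserved in the limit. The entire argument rides on the Lipschitz-like estimates of the fixed-$v$ semigroup $Q^v$ collected in \cite[Section 2.3]{EvHiMu_SIMA} plus Assumption \ref{ass: v properties}, part (iv), which controls how $v[\mu]$ varies with $\mu$ in $\|\cdot\|_\BL^*$.

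Step (i) is almost immediate: by Lemma \ref{lem: set timeslices bdd} every $\mu^k_t$ lies in the $\TV$-ball of radius $R:=\|\mu_0\|_\TV\,\exp(\|f\|_\infty T)$, so the constants $K_R, L_R, M_R$ from Assumption \ref{ass: v properties} apply uniformly along every Euler trajectory. Positivity of each $\mu^k_t$ follows because each $Q^{v_j}$ preserves $\CM^+([0,1])$ by \cite[Corollary 3.4]{EvHiMu_JDE}, applied iteratively on each subinterval $(t^k_j,t^k_{j+1}]$.

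The heart of the proof is step (ii). I would fix $k<l$, work on a common refinement of $\alpha_k$ and $\alpha_l$, and estimate $\|\mu^k_t-\mu^l_t\|_\BL^*$ inductively on subintervals. On a subinterval where $\mu^k$ uses frozen velocity $v_j^k = v[\mu^k_{t^k_j}]$ and $\mu^l$ uses $v_i^l = v[\mu^l_{t^l_i}]$, a triangle-inequality split
\begin{equation*}
\|Q^{v_j^k}_s\mu^k_{t^k_j}-Q^{v_i^l}_s\mu^l_{t^l_i}\|_\BL^*
\;\leqs\;
\|Q^{v_j^k}_s\mu^k_{t^k_j}-Q^{v_j^k}_s\mu^l_{t^l_i}\|_\BL^*
+\|Q^{v_j^k}_s\mu^l_{t^l_i}-Q^{v_i^l}_s\mu^l_{t^l_i}\|_\BL^*
\end{equation*}
reduces matters to (a) the Lipschitz estimate of $Q^{v}$ in its initial datum and (b) the Lipschitz estimate of $Q^{v}$ in the velocity parameter, both from \cite[Section 2.3]{EvHiMu_SIMA}. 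Combined with $\|v_j^k-v_i^l\|_\infty \leqs M_R\|\mu^k_{t^k_j}-\mu^l_{t^l_i}\|_\BL^*$ and the continuity of $t\mapsto\mu^k_t$, the velocity mismatch contributes a term of order $M^{(k)}$ per subinterval. A discrete Gronwall argument then yields a bound of the shape $\sup_{t\in[0,T]}\|\mu^k_t-\mu^l_t\|_\BL^* \leqs C(T,R)\,M^{(k)}$, which vanishes as $k\to\infty$. This is the step I expect to be the main obstacle: carefully keeping track of how the velocity-mismatch error propagates across successive Euler subintervals without losing a factor that blows up in the number of subintervals, and matching endpoints of $\alpha_k$ and $\alpha_l$ by using the uniform modulus of continuity of the Euler trajectories (again controlled by $K_R$ and $\|f\|_\infty$).

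Granted the Cauchy estimate, steps (iii) and (iv) are short. Completeness of $C([0,T];(\CM([0,1]),\|\cdot\|_\BL^*))$ under the sup metric, together with closedness of $\CM^+([0,1])$ under $\|\cdot\|_\BL^*$, gives existence of a limit $\mu\in C([0,T];\CM^+([0,1]))$. For independence of the partition sequence, given two sequences $(\alpha_k)$ and $(\alpha'_k)$ satisfying Assumption \ref{ass: partition}, interleave them into a single sequence $(\beta_k)$ whose mesh still tends to zero; the Cauchy estimate applies to this joint sequence, forcing the two limits to coincide. Uniqueness within $C([0,T];\CM^+([0,1]))$ then follows: any mild solution in the sense of Definition \ref{def: mild soln nonlin} is, by definition, a subsequential limit of some Euler sequence, and all such limits have just been shown to agree.
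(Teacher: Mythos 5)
The paper does not actually prove this theorem: its ``proof'' is the single line ``See \cite[Theorem 3.10]{EvHiMu\_SIMA} for details,'' so there is no in-paper argument to compare yours against. That said, your outline assembles exactly the ingredients the paper surrounds the statement with --- the uniform $\TV$-bound of Lemma \ref{lem: set timeslices bdd}, positivity preservation of $Q$ via \cite[Corollary 3.4]{EvHiMu\_JDE}, the Lipschitz estimates for $Q^v$ in both the initial datum and the velocity from \cite[Section 2.3]{EvHiMu\_SIMA}, and Assumption \ref{ass: v properties}(iv) to convert measure differences into velocity differences --- and this is the standard forward-Euler convergence strategy, so your plan is consistent with the route the cited proof takes. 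Two caveats. First, your step (ii) is a plan rather than a proof: the claimed bound $\sup_t\|\mu^k_t-\mu^l_t\|^*_\BL\leqs C(T,R)\,M^{(k)}$ is asserted, and you yourself identify the error propagation across subintervals as the main obstacle; whether the rate is exactly $O(M^{(k)})$ or something weaker is immaterial for the theorem, but the discrete Gronwall bookkeeping on a common refinement is where all the work lives and none of it is carried out. Second, a small technical wrinkle in step (iii): naively interleaving $(\alpha_k)$ and $(\alpha'_k)$ need not produce a sequence whose mesh sizes $M^{(k)}$ are nonincreasing, as Assumption \ref{ass: partition} requires; this is easily repaired (reorder by mesh size, or better, phrase the Cauchy estimate as a direct comparison of the Euler trajectories for two arbitrary partitions in terms of the larger of the two meshes, which makes the interleaving unnecessary), but as written the appeal to the Cauchy property of the interleaved sequence is not quite licensed by the assumption.
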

\begin{proof}
See \cite[Theorem 3.10]{EvHiMu_SIMA} for details.
\end{proof}

\begin{definition}[Weak solution to \eqref{eqn:main equation nonlin}]\label{def: weak soln nonlin}
Fix $T\geqs0$, let $f\in\BL([0,1])$ and let $\map{v}{\CM([0,1])\times[0,1]}{\R}$ satisfy Assumption \ref{ass: v properties}. Then $\map{\mu}{[0,T]}{\CM([0,1])}$ is a \emph{weak solution} to \eqref{eqn:main equation nonlin} corresponding to initial condition $\nu_0\in\CM([0,1])$, if 
\begin{equation}
\pair{\mu_T}{\psi(\cdot,T)}-\pair{\nu_0}{\psi(\cdot,0)} = \int_0^T \pair{\mu_t}{\partial_t\psi(\cdot,t) + \partial_x\psi(\cdot,t)\cdot v[\mu_t]} \, dt + \int_0^T \pair{F_f(\mu_t)}{\psi(\cdot,t)}\,dt
\end{equation}
is satisfied for all $\psi\in\Lambda^T$, with $\Lambda^T$ as defined in \eqref{eqn:weak v=v(x) test funct}.
\end{definition}
\noindent To show that mild solutions in the sense of Definition \ref{def: mild soln nonlin} are weak solutions in the sense of Definition \ref{def: weak soln nonlin}, the following result is useful.
\begin{lemma}[Cf.~portmanteau theorem]\label{lem:Portemanteau}
Let $(\nu^k)_{k\in\N}\subset \CM([0,1])$ and assume there is an $R>0$ such that $\nu^k([0,1])\leqs R$ for all $k\in\N$. Let $\nu\in\CM([0,1])$. The following are equivalent:
\begin{enumerate}[(a)]
\item $\pair{\nu^k}{\phi}\to\pair{\nu}{\phi}$ as $k\to\infty$ for all $\phi\in\BL([0,1])$;\label{part:Portemanteau BL}
\item $\pair{\nu^k}{\phi}\to\pair{\nu}{\phi}$ as $k\to\infty$ for all $\phi\in C_b([0,1])$.\label{part:Portemanteau Cb}
\end{enumerate}
\end{lemma}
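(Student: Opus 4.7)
The easy direction, (b) $\Rightarrow$ (a), is immediate from the inclusion $\BL([0,1]) \subset C_b([0,1])$: every bounded Lipschitz function on the compact interval $[0,1]$ is continuous and bounded. No use is made of the uniform mass bound in this direction.

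For (a) $\Rightarrow$ (b) the plan is a standard $3\varepsilon$-argument based on uniform approximation. Fix $\phi \in C_b([0,1])$ and $\varepsilon > 0$. Since $[0,1]$ is compact, $\phi$ is uniformly continuous, so $\BL([0,1])$ is uniformly dense in $C([0,1])$; hence I can choose $\phi_\varepsilon \in \BL([0,1])$ with $\|\phi - \phi_\varepsilon\|_\infty < \varepsilon$. One explicit construction is the inf-convolution $\phi_n(x) := \inf_{y \in [0,1]} \{\phi(y) + n\,|x-y|\}$, which is $n$-Lipschitz on $[0,1]$ and converges uniformly to $\phi$ as $n \to \infty$; alternatively, Stone--Weierstrass produces a polynomial approximant, which is automatically Lipschitz on $[0,1]$. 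With $\phi_\varepsilon$ in hand, I decompose
$$|\pair{\nu^k}{\phi} - \pair{\nu}{\phi}| \;\leqs\; |\pair{\nu^k}{\phi - \phi_\varepsilon}| \,+\, |\pair{\nu^k - \nu}{\phi_\varepsilon}| \,+\, |\pair{\nu}{\phi_\varepsilon - \phi}|.$$
The first and third terms are at most $\varepsilon\,\|\nu^k\|_\TV \leqs \varepsilon R$ and $\varepsilon\,\|\nu\|_\TV$, respectively; the middle term tends to zero as $k \to \infty$ by hypothesis (a), because $\phi_\varepsilon \in \BL([0,1])$. Taking $\limsup_{k \to \infty}$ and then letting $\varepsilon \to 0$ yields (b).

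There is no real analytic obstacle; the content is simply the combination of uniform density of $\BL([0,1])$ in $C([0,1])$ on a compact domain with a uniform $\TV$-bound. The only point worth flagging is the passage from the hypothesis $\nu^k([0,1]) \leqs R$ to a bound on $\|\nu^k\|_\TV$. In the setting of the current paper this is automatic: the sequences $(\nu^k)_{k\in\N}$ to which the lemma is applied consist of positive measures (cf.\ Theorem \ref{thm: exist uniq nonlin}), for which total mass coincides with the total variation norm. The finiteness of $\|\nu\|_\TV$ is free since $\nu$ is a finite Borel measure on the compact interval $[0,1]$.
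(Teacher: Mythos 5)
Your proof is correct, but it takes a genuinely different route from the paper. The paper's proof is a one-line reduction: it rescales to $(\nu^k/R)_{k\in\N}\subset\CM_{\leqs1}([0,1])$ and invokes the portmanteau theorem in the form of \cite[Theorem 13.16]{Klenke}, whose equivalence ``(ii)$\Leftrightarrow$(iii)'' is exactly the statement that weak convergence may be tested against $\BL([0,1])$ or against $C_b([0,1])$ interchangeably. You instead prove the nontrivial implication from scratch: uniform density of $\BL([0,1])$ in $C([0,1])$ on the compact interval (via inf-convolution or Stone--Weierstrass), combined with the uniform total-variation bound, in a standard $3\varepsilon$-argument. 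What your version buys is self-containedness and transparency about which hypotheses do the work (compactness of the domain and the mass bound); what the paper's version buys is brevity. One point you flag deserves emphasis: the step from $\nu^k([0,1])\leqs R$ to $\|\nu^k\|_\TV\leqs R$ genuinely requires the $\nu^k$ to be positive measures --- for signed measures the lemma as literally stated is false (e.g.\ $\nu^k=k(\delta_{1/k^2}-\delta_0)$ satisfies (a) with limit $0$ but fails (b) for $\phi(x)=\sqrt{x}$). The paper's own proof makes the same implicit assumption when it places $\nu^k/R$ in $\CM_{\leqs1}([0,1])$, and positivity does hold in the only application (the Euler approximations $\mu^k_t$ are positive by Theorem \ref{thm: exist uniq nonlin}), so your explicit acknowledgement of this is a feature, not a gap.
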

\begin{proof}
Apply \cite[Theorem 13.16]{Klenke} to the sequence $(\nu^k/R)_{k\in\N}\subset \CM_{\leqs 1}([0,1])$, and use the equivalence ``(ii)$\Leftrightarrow$(iii)" therein, which implies the equivalence with (\ref{part:Portemanteau Cb}) above.
\end{proof}
\noindent The main result of this paper is the following theorem.
\begin{theorem}[Mild solutions to \eqref{eqn:main equation nonlin} are weak solutions]\label{thm:mild is weak v=v(mu)}
Let $\map{\mu}{[0,T]}{\CM^+([0,1])}$ be the mild solution provided by Theorem \ref{thm: exist uniq nonlin}, corresponding to initial value $\nu_0\in\CM^+([0,1])$. Then, $\mu$ is a weak solution of \eqref{eqn:main equation nonlin}.
\end{theorem}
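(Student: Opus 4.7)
My strategy is to apply Theorem \ref{thm:mild is weak v=v(x)} on each subinterval of the Euler partition, assemble a global weak identity for each Euler approximation $\mu^k$, and then pass to the limit. Let $(\alpha_k)_{k\in\N}$ be any sequence of partitions satisfying Assumption \ref{ass: partition}. By construction of \eqref{eqn: Euler scheme}, the restriction of $\mu^k$ to $[t^k_j,t^k_{j+1}]$ is a mild solution of \eqref{eq:main equation} with the fixed velocity $v^k_j:=v[\mu^k_{t^k_j}]\in\BL([0,1])$ and initial datum $\mu^k_{t^k_j}$. Applying Theorem \ref{thm:mild is weak v=v(x)} on each such subinterval and summing over $j=0,\ldots,N_k-1$, the left-hand sides telescope and I obtain, for every $\psi\in\Lambda^T$,
\begin{equation*}
\pair{\mu^k_T}{\psi(\cdot,T)}-\pair{\nu_0}{\psi(\cdot,0)} = \int_0^T \pair{\mu^k_t}{\partial_t\psi(\cdot,t) + \partial_x\psi(\cdot,t)\cdot \tilde v^k(\cdot,t)}\,dt + \int_0^T \pair{F_f(\mu^k_t)}{\psi(\cdot,t)}\,dt,
\end{equation*}
where $\tilde v^k(x,t):=v^k_j(x)$ for $t\in(t^k_j,t^k_{j+1}]$ is piecewise constant in $t$.

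The crucial preparation for the limit $k\to\infty$ is to replace $\tilde v^k$ by $v[\mu_t]$. Using Assumption \ref{ass: v properties}(iv) together with the uniform bound $\|\mu^k_t\|_\TV\leqs R$ from Lemma \ref{lem: set timeslices bdd}, I estimate, for $t\in(t^k_j,t^k_{j+1}]$,
\begin{equation*}
\|\tilde v^k(\cdot,t)-v[\mu_t]\|_\infty \leqs M_R\bigl(\|\mu^k_{t^k_j}-\mu^k_t\|^*_\BL + \|\mu^k_t-\mu_t\|^*_\BL\bigr).
\end{equation*}
The second summand tends to $0$ uniformly in $t$ by Theorem \ref{thm: exist uniq nonlin}. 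For the first, I invoke the local Lipschitz-in-time estimates on the linear semigroups $Q^{v^k_j}$ from \cite[Section 2.3]{EvHiMu_SIMA} (the same estimates that underlie the construction of the Euler approximations); these give a bound of the form $C(t-t^k_j)\leqs C\,M^{(k)}$ with $C$ independent of $j,k$, so the first summand also vanishes uniformly as $k\to\infty$.

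With this in hand, I pass to the limit term by term. The uniform $\TV$-bound on $(\mu^k_t)$ together with the sup-$\|\cdot\|^*_\BL$ convergence $\mu^k_t\to\mu_t$ and Lemma \ref{lem:Portemanteau} yield $\pair{\mu^k_t}{\phi}\to\pair{\mu_t}{\phi}$ for every $\phi\in C_b([0,1])$; the analogous statement for $F_f(\mu^k_t)=f\cdot\mu^k_t$ follows at once. The functions $\partial_t\psi(\cdot,t)$, $\partial_x\psi(\cdot,t)\cdot v[\mu_t]$ and $f\cdot\psi(\cdot,t)$ all lie in $C_b([0,1])$ with bounds uniform in $t\in[0,T]$, so bounded convergence handles the three time integrals. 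The cross-term
\begin{equation*}
\int_0^T \pair{\mu^k_t}{\partial_x\psi(\cdot,t)\cdot(\tilde v^k(\cdot,t)-v[\mu_t])}\,dt
\end{equation*}
is dominated by $\|\partial_x\psi\|_\infty\,R\,\sup_t\|\tilde v^k(\cdot,t)-v[\mu_t]\|_\infty\cdot T$ and vanishes by the previous paragraph; the endpoint $\pair{\mu^k_T}{\psi(\cdot,T)}\to\pair{\mu_T}{\psi(\cdot,T)}$ because $\psi(\cdot,T)\in\BL([0,1])$. Combining these, the identity above passes to the limit and yields exactly the weak formulation of Definition \ref{def: weak soln nonlin}.

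\textbf{Main obstacle.} The delicate point is the replacement of $\tilde v^k$ by $v[\mu_t]$: it needs uniform-in-$(j,k)$ Lipschitz-in-time control of the Euler slices, which is a quantitative property of the linear semigroups $Q^{v^k_j}$ rather than something one reads off from the mere convergence $\mu^k\to\mu$. Once that bound is secured, the remaining steps are standard Fubini, bounded convergence and portmanteau-type arguments, and the only genuinely new ingredient beyond Theorem \ref{thm:mild is weak v=v(x)} is the bookkeeping of the telescoping sum and this Lipschitz estimate.
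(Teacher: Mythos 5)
Your proof follows the same architecture as the paper's: apply Theorem \ref{thm:mild is weak v=v(x)} on each Euler subinterval, telescope over $j$, and pass to the limit using Lemma \ref{lem:Portemanteau}, the uniform $\TV$-bound of Lemma \ref{lem: set timeslices bdd}, and dominated convergence. The one place you diverge is the replacement of the piecewise-constant velocity by $v[\mu_t]$: you split $\|v[\mu^k_{t^k_j}]-v[\mu_t]\|_\infty$ through $v[\mu^k_t]$, which forces you to control $\|\mu^k_{t^k_j}-\mu^k_t\|^*_\BL$ via a quantitative, uniform-in-$(j,k)$ Lipschitz-in-time bound on the semigroups $Q^{v^k_j}$ --- exactly the step you flag as the main obstacle. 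The paper instead splits through $v[\mu_{t^k_j}]$, so that the two resulting terms are $M_R\,\sup_{\tau}\|\mu^k_\tau-\mu_\tau\|^*_\BL$ (handled by the uniform convergence $\mu^k\to\mu$) and $M_R\,\|\mu_{t^k_j}-\mu_t\|^*_\BL$ (handled by the continuity of the limit trajectory $t\mapsto\mu_t$ together with $t-t^k_j\leqs M^{(k)}\to0$); this avoids importing any quantitative estimate on the Euler slices beyond what Theorem \ref{thm: exist uniq nonlin} already provides. Your route is also viable --- the Lipschitz-in-time estimates you invoke do hold with constants controlled by $K_R$, $L_R$ and $\|f\|_\infty$, hence uniformly in $(j,k)$ by Lemma \ref{lem: set timeslices bdd} and Assumption \ref{ass: v properties} --- but the paper's decomposition makes the obstacle disappear. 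Everything else (the telescoping identity, the treatment of the $F_f$ term via the $\BL$-algebra bound or equivalently via $C_b$-convergence, the endpoint term, and the final dominated-convergence step) matches the paper's proof.
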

\noindent The proof of Theorem \ref{thm:mild is weak v=v(mu)} uses ideas from the proof of \cite[Proposition 4.9]{Hoogwater-thesis}.
\begin{proof}
Let $(\alpha_k)_{k\in\N}$ be a sequence of partitions satisfying Assumption \ref{ass: partition}, and for each $k\in\N$, let $\mu^k\in C([0,T];\CM([0,1]))$ be the corresponding sequence of Euler approximations defined by \eqref{eqn: Euler scheme}. Since $\mu$ is the unique mild solution, $\mu=\lim_{k\to\infty}\mu^k$ holds with convergence in the metric given by \eqref{eqn: metric sup BL norm}.\\
\\
Fix a $\psi\in\Lambda^T$. For each $j\in\{0,\ldots,N_k-1\}$ we apply Theorem \ref{thm:mild is weak v=v(x)} to subinterval $[t^k_j,t^k_{j+1}]$ in the Euler approximation. Note that the restriction $\psi|_{[t^k_j,t^k_{j+1}]}$ is an appropriate test function on the corresponding domain $[0,1]\times[t^k_j,t^k_{j+1}]$. The individual test functions on these subdomains are therefore all derived from the \textit{same} $\psi$ on $[0,1]\times[0,T]$. Moreover, the required regularity per subdomain and the spatial boundary conditions are simply inherited from $\psi$.\\
For each $j\in\{0,\ldots,N_k-1\}$ we thus have
\begin{align}
\nonumber \pair{\mu^k_{t^k_{j+1}}}{\psi(\cdot,t^k_{j+1})}-\pair{\mu^k_{t^k_j}}{\psi(\cdot,t^k_j)} =& \int_{t^k_j}^{t^k_{j+1}} \pair{\mu^k_t}{\partial_t\psi(\cdot,t) + \partial_x\psi(\cdot,t)\cdot v[\mu^k_{t^k_j}]} \, dt \\
&+ \int_{t^k_j}^{t^k_{j+1}} \pair{F_f(\mu^k_t)}{\psi(\cdot,t)}\,dt,
\end{align}
while $\mu^k_{t^k_0}=\mu^k_0=\nu_0$. Note that within the integrals we do not need to write $\psi|_{[t^k_j,t^k_{j+1}]}$, but we can simply use $\psi$. Summation over $j\in\{0,\ldots,N_k-1\}$ yields
\begin{align}\label{eqn:sum weak on subintervals}
\nonumber \pair{\mu^k_T}{\psi(\cdot,T)}-\pair{\nu_0}{\psi(\cdot,0)} =& \sum_{j=0}^{N_k-1}\int_{t^k_j}^{t^k_{j+1}} \pair{\mu^k_t}{\partial_t\psi(\cdot,t) + \partial_x\psi(\cdot,t)\cdot v[\mu^k_{t^k_j}]} \, dt\\
\nonumber & + \int_0^T \pair{F_f(\mu^k_t)}{\psi(\cdot,t)}\,dt\\
\nonumber =& \int_0^T \pair{\mu^k_t}{\partial_t\psi(\cdot,t) + \partial_x\psi(\cdot,t)\cdot \bar{v}^k(t,\cdot)} \, dt \\
&+ \int_0^T \pair{F_f(\mu^k_t)}{\psi(\cdot,t)}\,dt.
\end{align}
Here, $\map{\bar{v}^k}{[0,T]}{\BL([0,1])}$ is defined by $\bar{v}^k(t,\cdot):=v[\mu^k_{t^k_j}]$ whenever $t\in(t^k_j,t^k_{j+1}]$, while $\bar{v}^k(0,\cdot)=v[\nu_0]$.\\
\\
Note that, for each $t\in[0,T]$ fixed, $\psi(\cdot,t)\in \BL([0,1])$ due to the assumed regularity on the test functions. Since $\mu^k\to\mu$ with respect to the metric in \eqref{eqn: metric sup BL norm} as $k\to\infty$, we have in particular that
\begin{equation}
\pair{\mu^k_T}{\psi(\cdot,T)} \stackrel{k\to\infty}{\longrightarrow} \pair{\mu_T}{\psi(\cdot,T)}.\label{eqn:conv pair at T}
\end{equation}
The second term on the right-hand side of \eqref{eqn:sum weak on subintervals} we treat as follows:
\begin{align}
\nonumber \left|\int_0^T \pair{F_f(\mu^k_t)}{\psi(\cdot,t)}\,dt - \int_0^T \pair{F_f(\mu_t)}{\psi(\cdot,t)}\,dt\right| \leqs& \int_0^T \left|\pair{\mu^k_t-\mu_t}{\psi(\cdot,t)\cdot f(\cdot)}\right|\,dt\\
\leqs& \int_0^T \|\psi(\cdot,t)\|_\BL\cdot\|f\|_\BL\cdot\|\mu^k_t-\mu_t\|^*_\BL\,dt,\label{eqn:est conv Ff pair int with f BL}
\end{align}
where we used that for each $t\in[0,T]$ the product $\psi(\cdot,t)\cdot f(\cdot)$ is bounded Lipschitz and $\|\psi(\cdot,t)\cdot f(\cdot)\|_\BL \leqs \|\psi(\cdot,t)\|_\BL\cdot\|f\|_\BL$; cf.~\eqref{eqn:Banach algebra}.\\
\\
Since $\psi\in C^1([0,1]\times[0,T])$, we know that
\begin{equation*}
\sup_{[0,1]\times[0,T]}|\psi|<\,\infty, \,\,\,\, \text{ and } \,\,\,\,
\sup_{[0,1]\times[0,T]}|\partial_x\psi|<\,\infty,
\end{equation*}
and hence,
\begin{equation*}
\sup_{t\in[0,T]}\|\psi(\cdot,t)\|_\BL <\infty.
\end{equation*}
Consequently, we have
\begin{align}
\nonumber &\left|\int_0^T \pair{F_f(\mu^k_t)}{\psi(\cdot,t)}\,dt - \int_0^T \pair{F_f(\mu_t)}{\psi(\cdot,t)}\,dt \right| \\
&\hspace{0.3\linewidth}\leqs  \sup_{t\in[0,T]}\|\psi(\cdot,t)\|_\BL\cdot\|f\|_\BL\cdot\underbrace{\sup_{t\in[0,T]}\|\mu^k_t-\mu_t\|^*_\BL}_{\to0 \text{ as }k\to\infty}\cdot \, T,
\end{align}
and thus 
\begin{equation}
\int_0^T \pair{F_f(\mu^k_t)}{\psi(\cdot,t)}\,dt \stackrel{k\to\infty}{\longrightarrow} \int_0^T \pair{F_f(\mu_t)}{\psi(\cdot,t)}\,dt .\label{eqn:conv pair Ff}
\end{equation}
Finally, we consider the first term on the right-hand side of \eqref{eqn:sum weak on subintervals}.\\
For each $t\in[0,T]$, it holds that $\|\mu^k_t-\mu_t\|^*_\BL\to 0$ as $k\to\infty$ since the convergence $\mu^k\to\mu$ is in the metric \eqref{eqn: metric sup BL norm}. Then in particular, $\pair{\mu^k_t}{\phi}\to\pair{\mu_t}{\phi}$ for all $\phi\in\BL([0,1])$. Since $\mu^k_t([0,1])=\|\mu^k_t\|_\TV$, Lemma \ref{lem: set timeslices bdd} provides a bound on $\mu^k_t([0,1])$ that is independent of $k$ and $t$. We can therefore apply Lemma \ref{lem:Portemanteau}, and conclude that 
\begin{equation}\label{eqn: mukt conv weakly against Cb}
\pair{\mu^k_t}{\phi}\to\pair{\mu_t}{\phi},\,\,\,\,\text{ as } k\to\infty,
\end{equation}
for all $\phi\in C_b([0,1])$.\\
For each $t\in[0,T]$, we have that $\partial_t\psi(\cdot,t),\partial_x\psi(\cdot,t)\in C_b([0,1])$ by the assumption that $\psi\in C^1_b([0,1]\times[0,T])$. Moreover, $v[\mu_t]\in\BL([0,1])$ and thus $\partial_x\psi(\cdot,t)\cdot v[\mu_t]\in C_b([0,1])$. Hence, \eqref{eqn: mukt conv weakly against Cb} implies that
\begin{align}
\pair{\mu^k_t}{\partial_t\psi(\cdot,t)}&\to \pair{\mu_t}{\partial_t\psi(\cdot,t)}   \text{, and} \label{eqn:conv pair muk mu against dtPsi}\\
\pair{\mu^k_t}{\partial_x\psi(\cdot,t)\cdot v[\mu_t]}&\to \pair{\mu_t}{\partial_x\psi(\cdot,t)\cdot v[\mu_t]} \label{eqn:conv pair muk mu against dxPsi vmu}
\end{align}
as $k\to\infty$.\\
\\
For each $t\in[0,T]$, the function $\partial_x\psi(\cdot,t)\cdot \left( \bar{v}^k(t,\cdot) -  v[\mu_t]\right)$ is in $C_b([0,1])$ and therefore
\begin{align}
\nonumber &\left| \pair{\mu^k_t}{\partial_x\psi(\cdot,t)\cdot \bar{v}^k(t,\cdot)} - \pair{\mu^k_t}{\partial_x\psi(\cdot,t)\cdot v[\mu_t]} \right| \\
\nonumber &\hspace{0.3\linewidth}= \left|\, \int_{[0,1]}\partial_x\psi(x,t)\cdot \left( \bar{v}^k(t,x) -  v[\mu_t](x)\right)\,d\mu^k_t(x) \, \right| \\
&\hspace{0.3\linewidth}\leqs \sup_{\tau\in[0,T]}\|\partial_x\psi(\cdot,\tau)\|_\infty\cdot \| \bar{v}^k(t,\cdot) -  v[\mu_t] \|_\infty \cdot \|\mu^k_t\|_\TV.\label{eqn:est pair vmuk against vmu}
\end{align}
Due to Lemma \ref{lem: set timeslices bdd}, it holds for each $t\in[0,T]$ that $\|\mu^k_t\|_\TV\leqs R:=\|\nu_0\|_\TV\,\exp(\|f\|_\infty\,T)$. The measure $\mu_t$ is positive for all $t\in[0,T]$, and thus
\begin{equation}
\|\mu_t\|_\TV = \|\mu_t\|^*_\BL \leqs \underbrace{\|\mu^k_t\|^*_\BL}_{\leqs R} + \underbrace{\|\mu^k_t-\mu_t\|^*_\BL}_{\to 0},
\end{equation}
whence $\|\mu_t\|_\TV\leqs R$. We can now use Assumption \ref{ass: v properties}(iv) to estimate the term $\| \bar{v}^k(t,\cdot) -  v[\mu_t] \|_\infty$.\\
Without loss of generality, assume that $t>0$. Let $j\in\{0,\ldots,N_k\}$ be such that $t^k_j<t\leqs t^k_{j+1}$. Then
\begin{align}
\nonumber \| \bar{v}^k(t,\cdot) -  v[\mu_t] \|_\infty = \| v[\mu^k_{t^k_j}] -  v[\mu_t] \|_\infty \leqs&\, \|v[\mu^k_{t^k_j}] -  v[\mu_{t^k_j}] \|_\infty + \|v[\mu_{t^k_j}] -  v[\mu_t] \|_\infty\\
\nonumber \leqs&\, M_R\, \|\mu^k_{t^k_j} -  \mu_{t^k_j} \|^*_\BL +M_R\, \|\mu_{t^k_j} -  \mu_t \|^*_\BL\\
\leqs&\, M_R\, \sup_{\tau\in[0,T]}\|\mu^k_{\tau} -  \mu_{\tau} \|^*_\BL +M_R\, \|\mu_{t^k_j} -  \mu_t \|^*_\BL.\label{eqn: vk - vmu est}
\end{align}
The first term on the right-hand side goes to zero, because of the uniform convergence of $\mu^k$ to $\mu$. The second term on the right-hand side goes to zero because $t\mapsto\mu_t$ is continuous and $t-t^k_j\leqs M^{(k)}\to 0$ as $k\to\infty$. Consequently, the left-hand side of \eqref{eqn: vk - vmu est} must vanish as $k\to\infty$.\\
It follows from \eqref{eqn:est pair vmuk against vmu} and the fact that the left-hand side of \eqref{eqn: vk - vmu est} goes to zero, that 
\begin{equation}
\left|\pair{\mu^k_t}{\partial_x\psi(\cdot,t)\cdot \bar{v}^k(t,\cdot)} - \pair{\mu^k_t}{\partial_x\psi(\cdot,t)\cdot v[\mu_t]} \right| \to 0, \text{ as } k\to \infty.
\end{equation}
This results yields, together with \eqref{eqn:conv pair muk mu against dxPsi vmu}, that for all $t\in[0,T]$
\begin{equation}
\pair{\mu^k_t}{\partial_x\psi(\cdot,t)\cdot \bar{v}^k(t,\cdot)}\to \pair{\mu_t}{\partial_x\psi(\cdot,t)\cdot v[\mu_t]}, \text{ as } k\to\infty, \label{eqn:conv muk vmuk to mu vmu}
\end{equation}
due to the triangle inequality.\\
\\
Combining \eqref{eqn:conv pair muk mu against dtPsi} and \eqref{eqn:conv muk vmuk to mu vmu}, we obtain that for all $t\in[0,T]$
\begin{equation}
\pair{\mu^k_t}{\partial_t\psi(\cdot,t)+\partial_x\psi(\cdot,t)\cdot \bar{v}^k(t,\cdot)}\to \pair{\mu_t}{\partial_t\psi(\cdot,t)+\partial_x\psi(\cdot,t)\cdot v[\mu_t]}, \text{ as } k\to\infty. \label{eqn:conv pair pw t}
\end{equation}
Since for each $t\in[0,T]$ and $k\in\N$, the function $\partial_t\psi(\cdot,t)+\partial_x\phi(\cdot,t)\cdot \bar{v}^k(t,\cdot)$ is an element of $C_b([0,1])$, it holds that
\begin{equation}
\left| \pair{\mu^k_t}{\partial_t\psi(\cdot,t)+\partial_x\phi(\cdot,t)\cdot \bar{v}^k(t,\cdot)} \right| \leqs \left( \sup_{\tau\in[0,T]}\|\partial_t\psi(\cdot,\tau)\|_\infty + \sup_{\tau\in[0,T]}\|\partial_x\psi(\cdot,\tau)\|_\infty\cdot K_R \right)\cdot R,\label{eqn:pair bound}
\end{equation}
with the same $R=\|\nu_0\|_\TV\,\exp(\|f\|_\infty\,T)$ as before. Here we used Lemma \ref{lem: set timeslices bdd} and Assumption \ref{ass: v properties}(ii).\\
\\
By the dominated convergence theorem, cf.~e.g.~\cite[Theorem 4.2]{Brezis}, we now obtain from \eqref{eqn:conv pair pw t} and \eqref{eqn:pair bound} in particular that 
\begin{equation}
\int_0^T\pair{\mu^k_t}{\partial_t\psi(\cdot,t)+\partial_x\psi(\cdot,t)\cdot \bar{v}^k(t,\cdot)}\,dt \to \int_0^T \pair{\mu_t}{\partial_t\psi(\cdot,t)+\partial_x\psi(\cdot,t)\cdot v[\mu_t]}\,dt, \text{ as } k\to\infty. \label{eqn:conv pair int}
\end{equation}
By taking the limit $k\to\infty$ in \eqref{eqn:sum weak on subintervals}, while taking \eqref{eqn:conv pair at T}, \eqref{eqn:conv pair Ff} and \eqref{eqn:conv pair int} into account, we obtain
\begin{align}
\nonumber \pair{\mu_T}{\psi(\cdot,T)}-\pair{\nu_0}{\psi(\cdot,0)} =& \int_0^T \pair{\mu_t}{\partial_t\psi(\cdot,t) + \partial_x\psi(\cdot,t)\cdot v[\mu_t]} \, dt + \int_0^T \pair{F_f(\mu_t)}{\psi(\cdot,t)}\,dt.
\end{align}
Since $\psi\in\Lambda^T$ is chosen arbitrarily, this proves the statement of the theorem.
%{\color{red}Some previous result (JDE?) where it is first proved for $f\in\BL$, estimate with only $\|f\|_\infty$, take sequence in $C_b$ converging to $f$ pw $\BL$, estimate carries over. See notes applicable analysis seminar. 
%Answer: JDE: in thm Uniqueness: lemma estimate $\|F_f(\nu)\|_{\TV}\leqs \|f\|_\infty\cdot \|\nu\|_{\TV}$.\\
%Useful for Portmanteau to prove equivalence (ii) and (iii) in Klenke?}
%
%$\bar{v}^k$ is piecewise constant in time, $\BL$ in space.
%\\
%Can $\bar{v}^k(t,\cdot)\cdot\mu^k_t$ be considered as a measure; Portemanteau there?
\end{proof}

\section{Context of the results and open issues}\label{sec: discussion}
In the Discussion section of \cite{EvHiMu_SIMA}, we explained that in fact we would like to consider mild solutions (with measure-dependent velocity) corresponding to a sequence $(f_n)_{n\in\N}\subset\BL([0,1])$, such that $f_n\to f$ pointwise, and $f$ is \textit{piecewise} bounded Lipschitz. Let $(\mu^{f_n})_{n\in\N}$ denote the corresponding sequence of measure-valued mild solutions, with measure-dependent velocity $v=v[\mu^{f_n}]$.\\
In \cite{EvHiMu_JDE} we specifically focussed on such sequence $(f_n)_{n\in\N}$ that describes a vanishing boundary layer in which mass is gated away from the domain. Assume there are regions around $0$ and $1$ in which mass decays, and that these regions shrink to zero width. That is, $f_n$ is nonzero only in a region around $x=0$ and $x=1$, respectively. Moreover, this region shrinks to zero as $n\to\infty$ and $f_n\to f$, where $f$ satisfies $f(x)=0$ if $x\in(0,1)$ and $f(0)=f(1)=-1$.\\
\\
We want to know whether the sequence $(\mu^{f_n})_{n\in\N}$ converges, and whether the limit coincides with the mild solution corresponding to $f$ (if this mild solution exists). Mild solutions were obtained in \cite{EvHiMu_SIMA} as the limit of Euler approximations. Let the approximating sequence corresponding to $f_n$ be indexed by $k$, and let $\mu^{f_n,k}$ be one such Euler approximation. The question is now whether the limits $k\to\infty$ and $n\to\infty$ commute. The following scheme shows the four limit processes involved:
\begin{center}
\begin{tabular}{llcrr}
&&\textbf{\small(A)}&&\\
\\
&$\boxed{\mu^{f_n,k}}$ &  $\stackrel{n\to\infty}{\longrightarrow}$ & $\boxed{\mu^{f,k}}$&\\
\\
\textbf{\small(C)}&$\downarrow $  $\stackrel{k\to\infty}{}$&& $\stackrel{k\to\infty}{}$  $\downarrow$ &\textbf{\small(B)}\\
\\
&$\boxed{\mu^{f_n}}$    &  $\stackrel{n\to\infty}\longrightarrow$ & $\boxed{\mu^{f}}$ &\\
\\
&&\textbf{\small(D)}&&
\end{tabular}
\end{center}
Note that each of the limits should be understood as convergence in the metric given by \eqref{eqn: metric sup BL norm}.\\
\\
At this moment we are not yet able to prove that this scheme represents reality, but the results of this paper yield additional insight. Regarding the limit processes (A), (B), (C) and (D) in the scheme above, the following can be said:
\begin{enumerate}[(A)]
\item For fixed $k\in\N$, there is an obvious \emph{candidate} for $\lim_{n\to\infty} \mu^{f_n,k}$, namely $\mu^{f,k}$, the Euler approximation corresponding to $f$. Note that, for fixed $k$, $\mu^{f,k}$ is well-defined by \eqref{eqn: Euler scheme} and Theorem \ref{thm:exist uniq v prescribed} (that is, \cite[Propositions 3.1 and 3.3]{EvHiMu_JDE}). However, it is nontrivial to actually show that $\sup_{t\in[0,T]}\| \mu^{f_n,k}_t - \mu^{f,k}_t \|^*_\BL\to 0$ as $n\to\infty$.\\
The most straight-forward way to prove this, would be to look at the interval $(t^k_j,t^k_{j+1}]$, estimate $\| \mu^{f_n,k}_\tau - \mu^{f,k}_\tau \|^*_\BL$ for arbitrary $\tau\in(t^k_j,t^k_{j+1}]$, take the supremum over $\tau$ and finally the maximum over $j$. We will now point out what the problem is with this strategy.\\
Like in \eqref{eqn: Euler scheme}, let the semigroup $Q$ denote the operator that maps initial data to the solution in the sense of Definition \ref{def: mild soln fixed v}. From now on, we use $Q^{u,g}$ to denote the semigroup associated to velocity $u\in\BL([0,1])$ and right-hand side $F_g$, where $g$ is piecewise bounded Lipschitz. For any $\tau\in(t^k_j,t^k_{j+1}]$, we have
\begin{align*}
\mu^{f_n,k}_\tau &= Q^{u,f_n}_{\tau-t^k_j}\,\mu^{f_n,k}_{t^k_j},\\
\mu^{f,k}_\tau &= Q^{\bar{u},f}_{\tau-t^k_j}\,\mu^{f,k}_{t^k_j},
\end{align*}
with $u=v[\mu^{f_n,k}_{t^k_j}]$ and $\bar{u}=v[\mu^{f,k}_{t^k_j}]$. To estimate $\| \mu^{f_n,k}_\tau - \mu^{f,k}_\tau \|^*_\BL$ from above, one would use the triangle inequality and obtain three terms of the form
\begin{equation}\label{eqn:typical term triangle est}
\| Q^{w,g}_{\tau-t^k_j}\,\nu - Q^{\bar{w},\bar{g}}_{\tau-t^k_j}\,\bar{\nu} \|^*_\BL,
\end{equation}
for the appropriate choices of 
\begin{equation*}
w,\bar{w}\in\{ v[\mu^{f_n,k}_{t^k_j}],  v[\mu^{f,k}_{t^k_j}]\},   \hspace{0.1\linewidth}  g,\bar{g}\in\{ f_n, f \},   \hspace{0.1\linewidth}  \nu,\bar{\nu}\in\{ \mu^{f_n,k}_{t^k_j}, \mu^{f,k}_{t^k_j}  \}.
\end{equation*}
Specifically, we would use the triangle inequality in such a way that in each term on the right-hand side two of these three pairs of variables are the same (e.g.~$w=\bar{w}$, $g=\bar{g}$ and $\nu\neq\bar{\nu}$). An estimate for each term of the form \eqref{eqn:typical term triangle est} follows e.g.~from \cite[Proposition 3.5]{EvHiMu_JDE}, \cite[Proposition 4.2]{EvHiMu_JDE}, \cite[Corollary 2.9]{EvHiMu_SIMA} or \cite[Lemma 2.10]{EvHiMu_SIMA}. There is freedom in how exactly we apply the triangle inequality, and thus in which specific terms of the form \eqref{eqn:typical term triangle est} appear on the right-hand side. However, each of these approaches results in an upper bound that depends on the Lipschitz constant of $f_n$ (mostly via the bounded Lipschitz norm of $f_n$), which is unbounded as $n\to\infty$ if $f$ is discontinuous.\\
For the moment, it is therefore still an open question whether $\mu^{f_n,k}$ converges to $\mu^{f,k}$ as $n\to\infty$.

\item We emphasize that the results of Theorems \ref{thm:mild is weak v=v(x)} and \ref{thm:mild is weak v=v(mu)} in this paper, do not hinge on the assumption that $f\in\BL([0,1])$. Theorem \ref{thm:mild is weak v=v(x)} is stated explicitly to hold for any $f$ that is piecewise bounded Lipschitz. Theorem \ref{thm:mild is weak v=v(mu)} is only restricted to $f\in\BL([0,1])$, because it builds on Theorem \ref{thm: exist uniq nonlin}, which we managed to prove only for continuous $f$ in \cite[Theorem 3.10]{EvHiMu_SIMA}. Let us \textit{assume} that Theorem \ref{thm: exist uniq nonlin} does provide the convergence of Euler approximations \emph{even} for $f$ that is piecewise bounded Lipschitz. Note that Theorem \ref{thm:exist uniq v prescribed} demands that the discontinuities of $f$ and zeroes of $v$ do not coincide; for the sake of the argument here, ignore this complication and assume that we \emph{can} generalize Theorem \ref{thm: exist uniq nonlin} to piecewise bounded Lipschitz functions $f$. %\footnote{In that case, one needs to be aware of the fact that Theorem \ref{thm:exist uniq v prescribed} demands that the discontinuities of $f$ do not coincide with any of the possible zeroes of $v$. It is not immediately clear how to make sure that this condition is satisfied for each of the subintervals in the Euler approximation, when $v$ depends on the solution at the beginning of the subinterval. Compare this to the remark we made in \cite[Section 1]{EvHiMu_SIMA} about assuming anything about the sign of $v$ at $x=0$ or $x=1$. However, for the sake of the argument here, ignore this complication and assume that we \emph{can} generalize Theorem \ref{thm: exist uniq nonlin} to piecewise bounded Lipschitz functions $f$.}
In that case the statement of Theorem \ref{thm:mild is weak v=v(mu)} still holds: the resulting mild solution is a weak solution. The current proof of Theorem \ref{thm:mild is weak v=v(mu)} requires a slight modification, though, since in \eqref{eqn:est conv Ff pair int with f BL} we used that $f\in\BL([0,1])$ to obtain immediately an estimate against $\|f\|_\BL$.\\
The convergence in \eqref{eqn:conv pair Ff}, can however be obtained for $f$ piecewise bounded Lipschitz, using arguments very much like the ones leading to \eqref{eqn:conv pair int}. These arguments involve the portmanteau theorem 
(in a slightly more general form than Lemma \ref{lem:Portemanteau}) and the dominated convergence theorem. See Appendix \ref{app:conv pair f not BL} for more details.

\item Since $n$ is fixed in this step and since $f_n\in\BL([0,1])$ for each $n$, this convergence result is covered by \cite[Theorem 3.10]{EvHiMu_SIMA}. In the current work, we show that the limit $\mu^{f_n}$ is a weak solution; see Theorem \ref{thm:mild is weak v=v(mu)}.
\item We \textit{assumed} above that $\mu^f$ can be obtained as the limit of the Euler approximations $\mu^{f,k}$, and we stress here that this is only an assumption. When trying to relate the mild solutions $\mu^{f_n}$ to the mild solution $\mu^f$ by letting $n$ tend to infinity, one encounters the following problem: a mild solution is defined as the limit of a sequence of Euler approximations, but this does not provide a useful characterization of the limit itself. We suggested in \cite{EvHiMu_SIMA} to use the weak formulation of the problem as an alternative characterization. In the current paper we show that $\mu^{f_n}$ is a weak solution. A next step would be to show that this solution converges in some sense (e.g.~weakly) as $n\to\infty$.
\end{enumerate}
Even if the addressed problems in steps (A)--(D) would be resolved, an additional argument is needed to conclude that the two limits $k\to\infty$ and $n\to\infty$ commute -- and thus the scheme above is fully correct.\\
Under the aforementioned assumption that $\mu^{f,k}\to\mu^f$ as $k\to\infty$, the mild solution obtained via the route (A)-(B) is a weak solution, as argued above. On the other hand, our considerations regarding route (C)-(D) lead to an alternative $\mu^f$, obtained as the (weak?) limit of the weak solutions $\mu^{f_n}$. It might be possible to show (easily) that this $\lim_{n\to\infty}\mu^{f_n}$ is also a weak solution.\\
\\
Finally, assume that we want to compare $\lim_{k\to\infty}\mu^{f,k}$, resulting from (A)-(B), to \linebreak $\lim_{n\to\infty}\mu^{f_n}$, resulting from (C)-(D), based on the fact that they are both weak solutions. We did not prove in this paper that weak solutions are unique. To be able to identify $\lim_{k\to\infty}\mu^{f,k}$ with $\lim_{n\to\infty}\mu^{f_n}$, therefore an extra uniqueness criterion or selection criterion might be needed. The issue of uniqueness is nontrivial and it probably plays a role which specific weak formulation is used and which space of test functions is chosen. This topic is `work in progress' and will (hopefully) be the subject of a follow-up paper. 

\section*{Acknowledgements}
I thank Sander C.~Hille (Leiden University, The Netherlands) for fruitful discussions and his valuable suggestions, and I thank the anonymous reviewer, whose comments helped to improve the manuscript.\\
Until 2015, I was a member of the Centre for Analysis, Scientific computing and Applications (CASA), and the Institute for Complex Molecular Systems (ICMS) at Eindhoven University of Technology, The Netherlands, supported financially by the Netherlands Organisation for Scientific Research (NWO), Graduate Programme 2010. Some of the results presented here, were obtained during my time in Eindhoven.

\begin{appendix}
\section{Basics of measure theory}\label{sec: basics meas th}
We denote by $\CM([0,1])$ the space of finite Borel measures on the interval $[0,1]$ and by $\CM^+([0,1])$ the convex cone of positive measures included in it.
%This cone defines a partial ordering on $\CM(S)$: $\mu\leqs \nu$ iff $\nu-\mu\in\CM^+(S)$. Clearly, $\mu\leqs \nu$ if and only if $\mu(E)\leqs \nu(E)$ for all Borel measurable $E\subset S$.
For $x\in [0,1]$, $\delta_x$ denotes the Dirac measure at $x$. Let
\begin{equation}\label{pairing}
\pair{\mu}{\phi} :=\int_{[0,1]} \phi \,d\mu
\end{equation}
denote the natural pairing between measures $\mu\in\CM([0,1])$ and bounded measurable functions $\phi$. The {\em push-forward} or {\em image measure} of $\mu$ under Borel measurable $\Phi:[0,1]\to [0,1]$ is the measure $\Phi\#\mu$ defined on Borel sets $E\subset [0,1]$ by
\begin{equation}
(\Phi\# \mu)(E) := \mu\bigl(\Phi^{-1}(E)\bigr).\label{eqn: def push-forw chapter bc}
\end{equation}
One easily verifies that $\langle\Phi\# \mu,\phi\rangle=\langle\mu,\phi\circ\Phi\rangle$.\\
\\
The {\em total variation norm} $\|\cdot\|_\TV$ on $\CM([0,1])$ is defined by
\begin{equation*}
\|\mu\|_{\TV}:= \sup\left\{\pair{\mu}{\phi}\; : \; \phi\in C_b([0,1]),\ \|\phi\|_\infty\leqslant 1 \right\},
\end{equation*}
where $C_b([0,1])$ is the Banach space of real-valued bounded continuous functions on $[0,1]$ equipped with the supremum norm $\|\cdot\|_\infty$. It follows immediately that for $\Phi:[0,1]\to [0,1]$ continuous, $\|\Phi\#\mu\|_\TV\leqs\|\mu\|_\TV$.\\
\\
In \cite{EvHiMu_JDE, EvHiMu_SIMA} and in this paper, we mainly use a different norm on $\CM([0,1])$ that we will introduce now. Let $\BL([0,1])$ be the vector space of real-valued bounded Lipschitz functions on $[0,1]$, equipped with the norm
\begin{equation}\label{def:BL-norm}
\|\phi\|_\BL := \|\phi\|_\infty + |\phi|_\Lip
\end{equation}
for which this space is a Banach space \cite{Fortet-Mourier:1953,Dudley1}. Here,
\begin{equation*}
|\phi|_\Lip  := \sup\left\{ \frac{|\phi(x)-\phi(y)|}{|x-y|}\; : \; x,y\in [0,1],\ x\neq y\right\}
\end{equation*}
is the Lipschitz constant of an arbitrary $\phi\in\BL([0,1])$. With this norm $\BL([0,1])$ is a Banach algebra for pointwise product of functions:
\begin{equation}\label{eqn:Banach algebra}
\|\phi\cdot\psi\|_\BL \le \|\phi\|_\BL\,\|\psi\|_\BL.
\end{equation}
Let $\|\cdot\|_\BL^*$ be the dual norm of $\|\cdot\|_\BL$ on the dual space $\BL([0,1])^*$, i.e.~for any $x^*\in\BL([0,1])^*$ its norm is given by
\begin{equation*}
\|x^*\|_{\BL}^*:= \sup\left\{ |\pair{x^*}{\phi}|\; :\; \phi\in\BL([0,1]),\ \|\phi\|_\BL\leqslant1\right\}.
\end{equation*}
A linear {\em embedding} of $\CM([0,1])$ into $\BL([0,1])^*$ is provided by the map $\mu\mapsto I_\mu$ with $I_\mu(\phi):=\pair{\mu}{\phi}$; see \cite[Lemma~6]{Dudley1}. Thus $\|\cdot\|^*_\BL$ induces a norm on $\CM([0,1])$, which is denoted by the same symbols. It is called the dual bounded Lipschitz norm or Dudley norm. Generally, $\|\mu\|_\BL^*\leqslant\|\mu\|_\TV$ for all $\mu\in\CM([0,1])$. For positive measures the two norms coincide:
\begin{equation}\label{TV norm is dual BL norm for pos measures}
\|\mu\|_{\BL}^*=\mu([0,1])=\|\mu\|_{\TV} \hspace{1 cm}\text{for all }\mu\in\CM^+([0,1]).
\end{equation}
In general, the space $\CM([0,1])$ is not complete for $\|\cdot\|_\BL^*$. We denote by $\CMc([0,1])_\BL$ its completion, viewed as closure of $\CM([0,1])$ within $\BL([0,1])^*$. The space $\CM^+([0,1])$ is complete for $\|\cdot\|^*_\BL$, hence closed in $\CM([0,1])$ and $\CMc([0,1])_\BL$.\\
\\
The $\|\cdot\|_\BL^*$-norm is convenient also for integration. In \cite[Appendix~C]{EvHiMu_JDE} some technical results about integration of measure-valued maps were collected.

\section{Proof of convergence statement \eqref{eqn:conv pair Ff} for discontinuous $f$}\label{app:conv pair f not BL}
The proof is based on the dominated convergence theorem and makes use of the portmanteau theorem.\\
\\
For each $\nu\in\CM([0,1])$,
\begin{equation}\label{eqn:est Ff TV for f not BL}
\|F_f(\nu)\|_\TV \leqs \|f\|_\infty\cdot \|\nu\|_{\TV}
\end{equation}
holds, even for $f$ piecewise bounded Lipschitz, hence not necessarily in $C_b([0,1])$. This inequality was previously used in the proof of \cite[Proposition 3.1]{EvHiMu_JDE}; a proof can be found in \cite[Lemma 4.3.1]{Evers_PhD}. The proof makes use of an approximation of $f$ by $C_b$-functions.\\ % I added this lemma in my thesis; details are omitted in the JDE paper
Note that each mild solution $(\nu_t)_{t\in[0,T]}$ corresponding to a fixed $v\in\BL([0,1])$ and with $f$ piecewise bounded Lipschitz satisfies 
\begin{equation}
\|\nu_t\|_\TV\leqs\|\nu_0\|_\TV \exp(\|f\|_\infty t),\label{eqn:TV bounded mild soln f not BL}
\end{equation} 
according to \cite[Proposition 3.3]{EvHiMu_JDE}; the proof of that claim was left to the reader there. The argument is based on Gronwall's Lemma applied to the estimate
\begin{equation}
\|\nu_t\|_\TV \leqs \|\nu_0\|_\TV + \int_0^t \|f\|_\infty \cdot \|\nu_s\|_\TV \, ds,\label{eqn:VOC TV est before Gronwall}
\end{equation}
while the latter inequality follows from the variation of constants formula and the estimate $\|P_\tau\mu\|_\TV\leqs \|\mu\|_\TV$ that holds for any $\mu\in\CM([0,1])$, due to \eqref{eqn:TV norm Pt}. %since $|\pair{P_\tau\mu}{\phi}|=|\pair{\mu}{\phi\circ\Phi_\tau}|\leqs \|\mu\|_\TV \cdot \|\phi\|_\infty$. 
Moreover, \eqref{eqn:est Ff TV for f not BL} is used to obtain \eqref{eqn:VOC TV est before Gronwall}.\\
\\
The arguments in \cite[Lemmas 3.4 and 2.8(i)]{EvHiMu_SIMA} build on \eqref{eqn:TV bounded mild soln f not BL}, and thus the result of Lemma \ref{lem: set timeslices bdd} in this paper is still valid if $f$ is piecewise bounded Lipschitz. That is, the uniform estimate 
\begin{equation}
\|\mu^k_t\|_\TV\leqs\|\nu_0\|_\TV \exp(\|f\|_\infty T)\label{eqn:bdd timeslices f not BL}
\end{equation}
holds for all $k$ and $t$.\\
\\
For each $t\in[0,T]$ fixed, note that $\psi(\cdot,t)$ is in $C_b([0,1])$ and thus,
\begin{equation}
\left|\pair{F_f(\mu^k_t)}{\psi(\cdot,t)}\right| \leqs \sup_{\tau\in[0,T]}\|\psi(\cdot,\tau)\|_\infty \cdot \|F_f(\mu^k_t)\|_\TV \leqs \sup_{\tau\in[0,T]}\|\psi(\cdot,\tau)\|_\infty \cdot \|f\|_\infty\cdot \|\mu^k_t\|_{\TV},\label{eqn:est pair Ff t TV}
\end{equation}
for each $k\in\N$, where the last step is due to \eqref{eqn:TV bounded mild soln f not BL}. Combining \eqref{eqn:est pair Ff t TV}   with \eqref{eqn:bdd timeslices f not BL}, we obtain that the function $t\mapsto\pair{F_f(\mu^k_t)}{\psi(\cdot,t)}$ is bounded uniformly in $k$ and for each $t\in[0,T]$ by the (constant) function $t\mapsto \sup_{\tau\in[0,T]}\|\psi(\cdot,\tau)\|_\infty \cdot \|f\|_\infty\cdot \|\nu_0\|_\TV \exp(\|f\|_\infty T)$.\\
\\
Next we prove that $t\mapsto\pair{F_f(\mu^k_t)}{\psi(\cdot,t)}$ converges pointwise. We emphasize that in the rest of this proof we work here under the (unproved) hypothesis that the Euler approximations $\mu^k$ converge to some unique mild solution $\mu$ even if $f$ is piecewise bounded Lipschitz.\\
For each $t\in[0,T]$, the function $x\mapsto f(x)\cdot\psi(x,t)$ is bounded and measurable, because $f$ is piecewise bounded Lipschitz, and $\psi(\cdot,t)\in C^1_b([0,1])$. Trivially, $\pair{F_f(\mu^k_t)}{\psi(\cdot,t)}=\pair{\mu^k_t}{f\cdot\psi(\cdot,t)}$ holds.\\
By hypothesis $\mu^k\to\mu$ in the metric \eqref{eqn: metric sup BL norm}, hence we know in particular that $\pair{\mu^k_t}{\phi}\to\pair{\mu_t}{\phi}$ as $k\to\infty$ for all $\phi\in\BL([0,1])$. Instead of the simple version of the portmanteau theorem presented here in Lemma \ref{lem:Portemanteau}, we use \cite[Theorem 13.16]{Klenke}, which states that convergence against $\BL([0,1])$ is equivalent to convergence against bounded measurable functions that are discontinuous only on a nullset. One such function is $x\mapsto f(x)\cdot\psi(x,t)$, since $f$ is assumed to have finitely many discontinuities. Hence, $\pair{\mu^k_t}{f\cdot\psi(\cdot,t)}\to\pair{\mu_t}{f\cdot\psi(\cdot,t)}$ as $k\to\infty$ for all $t\in[0,T]$, or equivalently
\begin{equation}
\pair{F_f(\mu^k_t)}{\psi(\cdot,t)}\to\pair{F_f(\mu_t)}{\psi(\cdot,t)}, \text{ as } k\to\infty.\label{eqn:pw conv pair Ff f not BL}
\end{equation}
Due to the uniform bound on $t\mapsto\pair{F_f(\mu^k_t)}{\psi(\cdot,t)}$ and the pointwise convergence \eqref{eqn:pw conv pair Ff f not BL}, the dominated convergence theorem yields in particular that \eqref{eqn:conv pair Ff} holds even for $f$ that is \emph{piecewise} bounded Lipschitz:
\begin{equation}
\int_0^T \pair{F_f(\mu^k_t)}{\psi(\cdot,t)}\,dt \stackrel{k\to\infty}{\longrightarrow} \int_0^T \pair{F_f(\mu_t)}{\psi(\cdot,t)}\,dt.
\end{equation}

\end{appendix}

\bibliographystyle{plain}%{siam.bst}%{alpha}
\bibliography{refs}

\end{document}